\def\thm@space@setup{%
\thm@preskip=1em \thm@postskip=0pt
}
\theoremstyle{plain}
\newtheorem{theorem}{Theorem}[section]
\newtheorem{lemma}[theorem]{Lemma}
\theoremstyle{definition}
\newtheorem{definition}[theorem]{Definition}
\newtheorem{remark}[theorem]{Remark}
\newtheorem{example}[theorem]{Example}
\newtheorem{proposition}[theorem]{Proposition}
\newtheorem{assumption}[theorem]{Assumption}
\acrodef{wlog}[WLOG]{without loss of generality}
\acrodef{lsc}[lsc]{lower semi-continuous}
\definecolor{red}{RGB}{163, 31, 52}
\definecolor{gray}{RGB}{194, 192, 191}
\definecolor{blue}{RGB}{59, 89, 152}
\definecolor{green}{RGB}{0, 179, 0}
\newcommand{\norm}[1]{\left\|#1\right\|}
\newcommand{\iprod}[2]{\left<#1, #2\right>}
\newcommand{\abs}[1]{\left|#1\right|}
\newcommand{\E}[2]{\mathbf{E}_{#1} \left[ #2 \right]}
\newcommand{\one}[1]{\mathbb{1}\left\{#1\right\}}
\renewcommand{\tfrac}[2]{{#1}/{#2}}
\newcommand{\set}[2]{\left\{ #1\ : \ #2 \right\}}
\newcommand{\tset}[2]{\{ #1\ : \ #2 \}}
\newcommand{\defn}[0]{:=}
\newcommand{\KL}[0]{{\rm{KL}}}
\newcommand{\LP}[2]{{\rm{LP}}(#1,#2)}
\newcommand{\ml}[0]{{\rm{ml}}}
\newcommand{\Prob}[0]{{\rm{Prob}}}
\newcommand{\mc}{\mathcal}
\newcommand{\mr}{\mathrm}
\renewcommand{\d}{{\mathrm{d}}}
\renewcommand{\Re}{\mathrm{R}}
\newcommand{\nRe}{\Re\cup\{+\infty\}}
\newcommand{\st}{\mr{s.t.}}
\renewcommand{\emph}{\textbf}
\DeclareMathOperator{\cl}{cl}
\DeclareMathOperator{\interior}{int}
\title{Efficient Data-Driven Optimization with Noisy Data}
\author[1]{Bart P.G.\ \mbox{Van Parys}\thanks{\href{mailto:vanparys@mit.edu}{vanparys@mit.edu}}}
\affil[1]{Sloan School of Management, MIT}
\begin{document}

\maketitle

\begin{abstract}
  Classical Kullback-Leibler or entropic distances have recently been shown \cite{vanparys2020data} to enjoy certain desirable statistical properties in the context of decision-making with noiseless data. {\color{black} However, in most practical situations the data available to a decision maker is subject to a certain amount of measurement noise}. We hence study here data-driven prescription problems in which the data is corrupted by a {\color{black} known noise source}. We derive efficient data-driven formulations in this noisy regime and indicate that they enjoy an entropic optimal transport interpretation. Finally, we show that these efficient robust formulations are tractable in several interesting settings by exploiting a classical representation result by \citet{strassen1965existence}.
\end{abstract}

\section{Introduction}
\label{sec:introduction}

{\color{black}
Let $\mc P$ be a family of probability measures over a space $\Xi$ and let $P$ be an unknown probability measure in this family.
We are interested in finding an $0<\epsilon$-suboptimal%
\footnote{Here $\arg_\epsilon \inf_{z\in Z}\, \E{P}{\ell(z, \xi)}$ with $\epsilon>0$ corresponds to the set $\set{z\in Z}{\E{P}{\ell(z, \xi)} <  \inf_{z\in Z} \E{P}{\ell(z, \xi)}+ \epsilon }$.}
solution to the stochastic optimization problem
\begin{equation}
  \label{eq:sop}
  z(P)\in \arg_\epsilon{\inf_{z\in Z}} ~\{\E{P}{\ell(z, \xi)} =\textstyle\int_\Xi \ell(z, \xi) \, \d P(\xi)\}.
\end{equation}
A wide spectrum of decision problems can be cast as instances of problem ~\eqref{eq:sop}. Shapiro et al.\ \cite{shapiro2014lectures} point out, that~\eqref{eq:sop} can be viewed as the first stage of a two-stage stochastic program, where the loss function $\ell:Z\times \Xi\to\Re$ embodies the optimal value of a subordinate second-stage problem. Alternatively, problem~\eqref{eq:sop} may also be interpreted as a generic learning problem in the spirit of statistical learning theory.

In practice, however, the distribution $P$ which describes the uncertain parameter $\xi$ is not known. The stochastic optimization community has in recent years focused on making decisions directly based on  a finite collection of independent data observations
\begin{equation}
  \label{eq:noiseless-observations}
  \xi_i \sim P \quad \forall i \in [1, \dots, N],
\end{equation}
instead. In this paper we will denote this observational model, considered by the overwhelming majority of the literature on data-driven decisions, described in Equation (\ref{eq:noiseless-observations}) as \textit{noiseless}. That is, the decision maker has access to uncorrupted independent samples from the probability measure of interest $P$.
However, it is often the case in practice that the data itself is not observed directly but rather through a measurement device with known limitations. Such noisy data is better known as censored data in statistics \cite{gijbels2010censored}.
An observational model in which the noisy data
\begin{equation}
  \label{eq:noisy-observations}
  \xi'_i = \xi_i+n_i \quad \forall i \in [1, \dots, N],
\end{equation}
is observed instead where the noise terms $n_i$ are independent and share a known distribution has been widely studied in the system control and identification literature.
We will come back to this observational model and further generalizations in Section \ref{sec:noisy-data}.
In the remainder of this section we first briefly discuss various results of interest in the noiseless observational model.

In case the probability measure $P$ is only known to belong to the probability simplex $\mc P(\Xi)$, one reasonable substitute for $P$ could be its empirically observed counterpart denoted here as $P_N\defn\sum_{i=1}^N \delta_{\xi_i}/N$.
If on the other hand some prior information is available in the sense that the probability measure $P$ is known to belong to a subset $\mc P\subset \mc P(\Xi)$, a maximum likelihood estimate \cite{groeneboom1992information} may be considered  instead.
In the machine learning and robust optimization community such point estimates are widely known to be problematic when used naively in subsequent analysis.
In particular, it is widely established both empirically as well as in theory that a sample average formulation
\begin{equation}
  \label{eq:saa}
  z(P_N)\in \arg_\epsilon{\inf_{z\in Z} \,\E{P_N}{\ell(z, \xi)}}
\end{equation}
which substitutes $P$ with a mere point estimate $P_N$ tends to disappoint out of sample. That is, the actual cost ($\E{P}{\ell(z(P_N), \xi)}$) observed out of sample exceeds the predicted cost $(\E{P_N}{\ell(z(P_N), \xi)})$ of the data-driven decision $z(P_N)$. This adversarial phenomenon is well known  colloquially as the ``Optimizer's Curse'' \cite{michaud1989optimizationengine} and is akin to the overfitting phenomenon in the context of prediction problems.
Such adversarial phenomena related to over-calibration to observed data but poor performance on out-of-sample data can be attributed primarily to the treatment of mere point estimates as exact substitutes for the unknown probability measure.

Ambiguity sets consisting of all probability measures sufficiently compatible with the observed data can offer a better alternative to simple point estimates.
As the data observations are here independent and identically distributed, their order is irrelevant, and ambiguity sets $\mc A_N(P_N) \subseteq \mc P$ can be made functions of the empirical probability measure $P_N$ rather than the data itself.
A large line of work in the robust optimization community, pioneered by \citet{scarf1958min}, focuses consequently on data-driven formulations of the form
\begin{equation}
  \label{eq:gen-dro}
  z_{\mc A}(P_N)\in \arg_\epsilon\inf_{z\in Z}\, \sup \,\set{\E{P}{\ell(z, \xi)}}{P\in \mc A_N(P_N)}
\end{equation}
which can be thought of as robust counterparts to the nominal sample average formulation stated in Equation \eqref{eq:saa} and where by convention consider the supremum to take the value $-\infty$ in case its feasible set $\mc A_N(P_N)$ is empty.
The recent uptick in popularity of such robust formulations is in no small part due to the fact that they are often just as tractable and typically enjoy superior statistical properties than their nominal counterpart.
Earlier work \citep{bertsimas2005optimal, delage2010distributionally, goh2010distributionally, wiesemann2014distributionally, vanparys2016generalized} focused on ambiguity sets consisting of probability measures sharing certain given moments.
More recent approaches \citep{bayraksan2015data, bertsimas2018data, hu2013kullback, gotoh2018robust, gupta2019near} however consider ambiguity sets $\mc A_N(P _N)=\set{P\in \mc P}{D(P_N, P)\leq r_N}$ which are based on some statistical distance $D:\mc P(\Xi)\times \mc P(\Xi)\to\nRe$\footnote{In this paper we will use \textit{distance} in a rather informal manner as indeed the distances we will encounter are neither symmetric nor satisfy a triangular inequality.}.
Such ambiguity sets are interpretable as the set of probability measures sufficiently close to the empirical probability measure $P_N$.
Two qualitatively different distances have recently positioned themselves as the front runners for data-driven decision-making and are now briefly discussed.

The optimal transport distance between a measure $\mu$ on $\Xi$ and a measure $\nu$ on another set $\Xi'$ can be defined as
\begin{equation}
  \label{def:wasserstein}
  W_0(\mu, \nu) \defn \inf_{T\in \mc T(\mu, \nu)} \int_{\Xi\times \Xi'} d(\xi , \xi')\, \d T(\xi, \xi')
\end{equation}
for a given the cost function $d:\Xi\times \Xi'\mapsto \nRe$. Here
\(
  \mc T(\mu, \nu) \defn \set{T\in \mc P(\Xi\times\Xi')\!}{\!\Pi_{\Xi'} T=\mu, \,\Pi_{\Xi}T=\nu}
\)
is the so called transport polytope and consists of all transport measures with given marginal measures $\mu$ and $\nu$. Assume for a moment that $\Xi=\Xi'=\Re^m$ and $d(\xi, \xi')=\norm{\xi-\xi'}_2$. Then, the optimal transport distance coincides with the classical Wasserstein distance between probability measures \citep{santambrogio2015optimal}.
Optimal transport distances  have received a lot of attention both in the context of prescriptive analytics \citep{esfahani2018data,chen2018robust,xie2019tractable,shafieezadeh2019regularization,blanchet2019robust} as well as in the machine learning community at large \citep{kuhn2019wasserstein, peyre2019computational}.
In the context of prescriptive analytics such distances have become very popular after the seminal work \cite{esfahani2018data} pointed out that the resulting robust formulation 
\(
z_{W,r}(P_N)\in \arg_\epsilon\inf_{z\in Z}\, \sup \,\set{\E{P}{\ell(z, \xi)}}{W_0(P_N, P)\leq r}
\)
need not be intractable and enjoys strong out-of-sample guarantees.  For $r<1$ and $2\leq \dim(\Xi)<\infty$, \citet{kuhn2019wasserstein} points out that we have for any $P$ with $A=\E{P}{\exp(\norm{\xi}_2^a)}<\infty$ for some $a>1$ that with $r'=(\tfrac{r}{C})^{\tfrac{1}{\dim(\Xi)}}$ we have
\begin{equation}
  \label{eq:dissapoint-wasserstein}
  \varlimsup_{N\to\infty} \frac 1N \log \Prob \left[\E{P}{\ell(z_{W,r'}(P_N), \xi)} > c_{W,r'}(z_{W,r'}(P_{N}), P_{N}) \right] \leq- r
\end{equation}
where $C$ is a known positive constant depending only on $a$, $A$ and $\dim(\Xi)$. In other words, the probability of being disappointed (the actual cost $\E{P}{\ell(z_{W,r'}(P_N), \xi)}$ of the decision $z_{W,r'}(P_N)$ exceeds the predicted worst-case cost $c_{W,r'}(z_{W,r'}(P_{N}), P_{N})$) decays exponentially fast in the number of samples.

Optimal transport distances are however computationally quite challenging as even determining the optimal transport distance between two given measures requires the solution of the associated linear optimization problem in Equation (\ref{def:wasserstein}).
In fact, optimal transport distances are often computed numerically, c.f., \citep{cuturi2013sinkhorn}, by considering their embedding in the larger family of entropic optimal transport distances
\begin{equation}
  \label{def:entropic-wasserstein}
  W_\epsilon(\mu, \nu) \defn \inf_{T\in \mc T(\mu, \nu)} \int_{\Xi\times \Xi'} d(\xi , \xi') \d T(\xi, \xi') + \epsilon \KL(T, \Pi_\Xi T\otimes \Pi_{\Xi'}T)
\end{equation}
as the limit $\epsilon \downarrow 0$ where we denote the entropic divergence between two finite measures $\mu$ and $\nu$ on the same space as
\[
  \KL(\mu, \nu)=
  \begin{cases}
    \int \log\left(\frac{\d \mu}{\d \nu}\right) \d \mu - \int \d \mu+\int \d\nu & {\rm{if}}~ \mu \ll \nu,\\[0.25em]
    +\infty & {\rm{otherwise}}
  \end{cases}
\]
where the random variable $\tfrac{\d \mu}{\d \nu}$ denotes the Radon-Nikodym derivative between $\mu$ and $\nu$.
The previously discussed entropic divergence is a particular member of the class of convex $f$-divergences which like the Wasserstein distance are well known \cite{hu2013kullback} to yield tractable robust formulations.
Moreover, when the loss function $\ell(z, \xi)$ is continuous on the compact set $Z\times\Xi$ the associated robust prescriptive formulation \eqref{eq:gen-dro} can be specialized to 
\(
z_{\KL,r}(P_N)\in \min_{z\in Z} \max \,\set{\E{P}{\ell(z, \xi)}}{\KL(P_N, P)\leq r}
\)
and enjoys strong statistical out-of-sample guarantees. One can prove \citep[Theorem 11]{vanparys2020data} for all $P\in \mc P(\Xi)$ that indeed
\begin{equation}
  \label{eq:dissapoint-kl}
  \varlimsup_{N\to\infty} \frac 1N \log \Prob \left[\E{P}{\ell(z_{\KL,r}(P_N), \xi)} >  \sup \,\set{\E{Q}{\ell(z_{\KL,r}(P_N), \xi)}+\epsilon}{\KL(P_N, Q)\leq r} \right] \leq -r.
\end{equation}
for any $\epsilon>0$.
In other words, the probability of being disappointed (the actual cost $\E{P}{\ell(z_{\KL,r}(P_N), \xi)}$ of the decision $z_{\KL,r}(P_N)$ exceeds the predicted worst-case cost $\sup \set{\E{Q}{\ell(z_{\KL,r}(P_N), \xi)}}{\KL(P_N, Q)\leq r}$ by any small amount $\epsilon>0$) decays exponentially fast in the number of samples with rate precisely the size $r$ of the considered ambiguity set.

Equations (\ref{eq:dissapoint-wasserstein}) and (\ref{eq:dissapoint-kl}) reflect the fact that when properly calibrated robust entropic and Wasserstein formulations enjoy essentially the same out-of-sample guarantees. The classical sample average formulation can also be made to enjoy similar out-of-sample guarantees by naively inflating its objective in Equation (\ref{eq:saa}) by some bias term $b>0$. Indeed, taking the bias term $b$ sufficiently large we can guarantee that
  \begin{equation}
    \label{eq:dissapoint-saa}
    \varlimsup_{N\to\infty} \frac 1N \log \Prob \left[\E{P}{\ell(z_N(P_N), \xi)} > \E{P_N}{\ell(z_N(P_N), \xi)} + b \right]   \leq -r ~~ \forall P\in \mc P.
  \end{equation}
  When several prescription formulations enjoy the same out-of-sample guarantees, we should prefer that formulation which inflates the cost prediction the least.
  \citet[Theorem 11]{vanparys2020data} shows that any formulation which enjoys the out-of-sample guarantee
  \begin{equation}
    \label{eq:dissapoint-gen}
    \varlimsup_{N\to\infty} \frac 1N \log \Prob \left[\E{P}{\ell(z_N(P_N), \xi)} > \tilde c_r(\tilde z_r(P_N), P_N)  \right]   \leq -r ~~ \forall P\in \mc P.
  \end{equation}
  must be more conservative in its cost predictions compared to an entropic formulation, i.e.,
  \[
    \tilde c_r(\tilde z_r(P_N), P_N)  \geq \sup \,\set{\E{Q}{\ell(z_{\KL,r}(P_N), \xi)}}{{\KL}(P_N, Q)\leq r} \geq \E{P_N}{\ell(z(P_N), \xi)} 
  \]
  indicating that the entropic formulation is universally least conservative or \textit{efficient}.
  We remark however that the efficiency of the entropic formulation is intimately tied to the noiseless data model. Indeed, \citet{vanparys2020data} establish the efficiency of the entropic formulation by pointing out that the Kullback-Leibler divergence is precisely the rate function characterizing large deviations between the empirical distribution $P_N$ and the noiseless data generating distribution $P$ \cite{dembo2009large}. 
 }

\subsection{Notation}

We will assume that $\Xi$ and $\Xi'$ are Polish topological spaces and hence so is the product space $\Xi\times \Xi'$ when equipped with the product topology. Given any set $S\subset \Xi'$ we denote with $S^\delta=\set{s'\in \Xi'}{\norm{s-s'}\leq \delta, ~s\in S}$ its $\delta$-inflation.
We denote with $\mc M_+(\Xi)$, $\mc M_+(\Xi')$ and $\mc M_+(\Xi\times\Xi')$ the sets of all positive Borel measures on the spaces $\Xi$, $\Xi'$ and $\Xi\times\Xi'$, respectively. Similarly, we denote with $\mc P(\Xi)$, $\mc P(\Xi')$ and $\mc P(\Xi\times\Xi')$ the sets of all Borel probability measures on the spaces $\Xi$, $\Xi'$ and $\Xi\times\Xi'$, respectively. Given two measures $\mu$ and $\nu$ we denote with $\mu\otimes \nu$ as their product measure.
Following \citet[Section 6.2]{dembo2009large} the probability simplices $\mc P(\Xi)$, $\mc P(\Xi')$ and $\mc P(\Xi\times\Xi')$ when equipped with the topology of weak convergence of probability measures are Polish spaces too.

\subsection{Contributions}

In this paper we generalize the framework of efficient formulations introduced by \cite{vanparys2020data} to problems with noisy data.
We present three contributions:

\begin{enumerate}
\item We prove in Theorem \ref{cor:optimal-dro} that the family of robust formulations
  \begin{equation}
    \label{eq:dro-smooth}
    \tilde z^{\delta} (P'_N) \in  \arg_\epsilon\inf_{z\in Z}\sup \set{\E{Q}{\ell(z, \xi)}}{Q\in \mc P, ~I^\delta(P'_N, Q)\leq r}
  \end{equation}
  parameterized in $\delta>0$ and where $P_N'=\sum_{i=1}^N \delta_{\xi'_i}/N$ denotes the empirical distribution of the noisy data essentially dominates all other formulations mirroring the notion of efficiency enjoyed by the entropic distance in the noiseless regime. Perhaps surprisingly we show that the rate function $I$ in the noisy setting is too irregular and its 
  $\delta$-smoothed counterparts
  \begin{equation}
    \label{eq:smooth-rate-function}
    I^\delta(\mu, \nu) \defn \inf\set{ I(\upsilon, \nu) }{\upsilon\in \mc P(\Xi'), ~\LP{\upsilon}{\mu}\leq \delta}
  \end{equation}
  must be considered instead. {\color{black} This implies in particular that in stark contrast to the noiseless setting there is not any longer a single most efficient formulation but rather a family of increasingly more efficient formulations.}

\item {\color{black}We show in Section \ref{sec:consistency} that by reducing the smoothing parameter $\delta_N$ and robustness parameter $r_N$ at an appropriate rate, consistent formulations with finite sample guarantees can be obtained under an identifiability condition and a mild assumption on the loss function $\ell$.}

\item We state tractable reformulations of the proposed novel family of efficient robust prescriptive formulations \eqref{eq:dro-smooth} under certain technical conditions on the loss function $\ell$ and space $\Xi$ in Section \ref{sec:tract-pred-form}. In particular, we exploit a classical representation result of \citet{strassen1965existence} which seems to be novel in this context and derive a dual formulation whose size is independent of the event set $\Xi'$. 

\end{enumerate}

\section{Decision-Making with Noisy Data}
\label{ssec:optimal-data-analytics}

\subsection{Noisy Data}
\label{sec:noisy-data}

As stated in Equation (\ref{eq:noisy-observations}) the distribution $P'$ of the noisy data $\xi'$ may be distinct from the distribution $P$ of the unobserved noiseless data $\xi$. We will allow the noisy data to takes value in $\Xi'$ which may be different from $\Xi$. That is, in general the noisy observations may take value in a different set than the noiseless observations.
We only assume here that the noisy observations are drawn independently as
\[
  \xi'_i \sim O_{\xi_i} \quad \forall i \in [1, \dots, N].
\]
{\color{black}That is, each noisy data point $\xi'_i$ is obtained as an independent draw from a distribution $O_{\xi_i}\in \mc P(\Xi')$ given a noiseless observation $\xi_i$}. We stress here again that we assume that the mapping $O:\Xi\to\mc P(\Xi')$ which characterizes our observational model is given. In other words, the distributional nature of the noise corrupting the unobserved data points is known. We will refer to $O$ as our observational model as it precisely characterizes how the noisy data is derived from the noiseless data.

{\color{black}To establish the decomposition result in Theorem \ref{thm:ldp} the observational model will be required to satisfy the following technical condition.}
\begin{assumption}
  \label{ass:density}
  The measure $O_\xi$ is absolutely continuous with respect to a base measure $m'$ for all $\xi\in \Xi$, i.e., $O_\xi\ll m'$ for all $\xi\in \Xi$. Furthermore, there exists a measurable function $d:\Xi\times \Xi'\to \Re$ so that
  \(
    \tfrac{\d O_{\xi}}{\d m'}(\xi') = \exp(-d(\xi, \xi')) 
    \)
 for all  $\xi'\in \Xi'$.
\end{assumption}

The relationship between the probability measure $P'$ of the noisy observations and the probability measure $P$ of the unobserved noiseless data can be characterized as the convolution $P'\defn O \star P$ and is given explicitly as
\(
  P'(B) = (O \star P)(B) \defn \int O_\xi(B) \d P(\xi)
\)
for all measurable sets $B \in \mc B(\Xi')$. We will denote with the set $\mc P'\defn \set{O\star P\in \mc P(\Xi')}{P\in \mc P}$ the family of potential distributions of our noisy data.

We point out that this noisy data model is quite flexible and captures a wide variety of settings. 

{\color{black}

\begin{example}[Additive Noise]
  \label{ex:additive-error}
  Practical measurements are typically corrupted by some amount of measurement error.
  We consider here independent additive measurement error $e_i$ from some distribution $E\in \mc P(\Xi')$ with $\Xi'=\Re^{\dim(\Xi)}$ as an example.
  In this case we observe the noisy data
  \(
    \xi_i' = \xi_i+e_i
  \)
  instead of the data $\xi_i$ itself. This observational model is characterized by the map $O^{AE}: \xi \mapsto E(\xi)$ where $E(\xi)$ denotes the error distribution translated by $\xi$, i.e., $E(\xi)[B]  =\int \one{\xi+e\in B} \d E(e)$ for every measurable set $B$ in $\Xi'$.
\end{example}
  
\begin{example}[Gaussian Noise]
  \label{ex:gaussian-noise}
  Perhaps the most classical example of the additive noise model discussed previously is the simple case of independent zero mean Gaussian additive noise $z_i$ with variance $\sigma^2I$.
  This observational model is characterized by the map $O^{GN}: \xi \mapsto N(\xi, \sigma^2 I)$ where $N(\mu, V)$ denotes here a normal distribution with mean vector $\mu\in \Re^{\dim(\Xi)}$ and variance matrix $V\in \Re^{\dim(\Xi)\times \dim(\Xi)}$.
  Assumption \ref{ass:density} holds for $O^{GN}$ with $\Xi'=\Re^{\dim(\Xi)}$ and $d(\xi, \xi')=\norm{\xi-\xi'}_2^2/(2\sigma^2)$ and $m'=\tfrac{\mu'}{(\sigma \sqrt{(2\pi)^{\dim(\Xi')}})}$ with $\mu'$ the Lebesgue measure on $\Xi'$.
\end{example}

\begin{example}[Clipping Noise]
  \label{ex:clipping-noise}
  Most measurement devices have a limited output range, i.e., $\Xi'=[a, b]$ with $a<b$, which is a strict subset of all potential outcomes $\Xi=\Re$. In this case we may only observe the censured data
  \(
    \xi_i'' = \max(\min(\xi'_i, b), a)
  \)
  instead of data $\xi'_i$ which itself has been corrupted by Gaussian noise as discussed in Example \ref{ex:gaussian-noise}.
  This observational model is characterized by the map $O^{CN}: \xi \mapsto N(\xi,\sigma^2)[-\infty, a)\cdot \delta_a + N(\xi,\sigma^2)(b, \infty]\cdot \delta_b + N(\xi,\sigma^2)[a, b]\cdot N_{[a, b]}(\xi, \sigma^2)$
  where $N_{[a, b]}(\xi, \sigma^2)$ denotes here a normal distribution truncated to the interval $[a,b]$.
  Assumption \ref{ass:density} holds here with
  \[
    d(\xi, \xi')=
    \begin{cases}
      -\log N(\xi,\sigma^2)[-\infty, a] &  {\rm{if}}~ \xi' = a,\\
      \norm{\xi-\xi'}_2^2/(2\sigma^2) & {\rm{if}}~ \xi'\in (a, b),\\
      -\log N(\xi,\sigma^2)[b, \infty] &  {\rm{if}}~ \xi' = b
    \end{cases}                 
  \]
  for all $\xi\in \Xi$ and $m'=\tfrac{\mu'}{(\sigma \sqrt{(2\pi)^{\dim(\Xi')}})}+\delta_{a} +\delta_{b}$ with $\mu'$ the Lebesgue measure on $\Re$ and $\delta_{a}$ and $\delta_b$ two Dirac measures at locations $a$ and $b$, respectively.
\end{example}

\begin{example}[Quantization Noise]
  \label{ex:quantization-noise}
  Digital measurements quantize the noisy measurements of Example \ref{ex:gaussian-noise} further in the sense that $\Xi'$ is necessarily only a finite subset of $\Xi$.
  Let $\Xi_{\xi'}$ denote the collection of all inputs $\xi$ in $\Xi$ which get quantized into the digital symbol $\xi'\in \Xi'$.
  This observational model is characterized by the map $O^{QN}: \xi \mapsto \sum_{\xi'\in \Xi'} N(\xi,\sigma^2)[\Xi_{\xi'}] \cdot \delta_{\xi'}$.
  Assumption \ref{ass:density} holds here with
  \[
    d(\xi, \xi')= -\log N(\xi, \sigma^2)[\Xi_{\xi'}] 
  \]
  for all $\xi'\in \Xi'$ and $\xi\in \Xi$ with $\mu'$ the counting measure on $\Xi'$.
\end{example}
}

We will attempt to infer the unknown probability measure $P$ from the noisy data based on its empirical probability measure $P_N'$.
  Clearly, considering the empirical probability measure rather than the noisy data directly imposes no loss of information as the order of the data points is of no consequence here.
  Sanov's theorem \citep[Theorem 6.2.10]{dembo2009large} ensures also here that the sufficient statistic $P_N'$ enjoys a large deviation property.
  That is, the empirical probability measure $P'_N$ satisfies for any open subset $\mc O\subseteq \mc P(\Xi')$ the large deviation lower bound
  \begin{subequations}
    \label{eq:ldp_exponential_rates:old}
    \begin{align}
      \label{eq:ldp_exponential_rates_lb:old}
      -\inf_{\hat P' \in \mc O} \, \KL(\hat P', O\star P)~\leq ~&  \varliminf_{N\to \infty}~\frac1N \log \Prob[ P'_N \in \mc O ] \\
      \intertext{and for any closed subset $\mc C\subseteq \mc P(\Xi')$ the large deviation upper bound}
                                                     &  \varlimsup_{N\to \infty}~\frac1N \log \Prob[ P_N'\in \mc C ] \leq -\inf_{\hat P' \in \mc C} \,  \KL(\hat P', O\star P). \label{eq:ldp_exponential_rates_ub:old}
    \end{align}
  \end{subequations}
  for the good\footnote{A rate function $I$ is good if its sublevel sets $\tset{\hat P'\in \mc P(\Xi')}{I(\hat P', P)\leq r}$ for any $r\geq 0$ and $P\in \mc P$ are compact \cite{dembo2009large}.} rate function $I(\hat P', P)\defn \KL(\hat P', O\star P)$.
  We remark that large deviation inequalities generally are quite rough in nature as indeed (\ref{eq:ldp_exponential_rates_lb:old}) and  (\ref{eq:ldp_exponential_rates_ub:old}) only pertain to open or closed sets (in the topology of weak convergence), respectively. The rate function is observed to be nonnegative and in fact $I(\hat P', P')=0$ if and only if $\hat P'=P'$. For any $\epsilon>0$, the large deviation inequality \eqref{eq:ldp_exponential_rates_ub:old} implies
\begin{align*}
  \varlimsup_{N\to \infty}~\frac1N \log \Prob[ \LP{P_N'}{P'}\geq \epsilon ] \leq -\min \set{ I(\hat P', P)}{\LP{\hat P'}{P'}\geq \epsilon} <0 \quad \forall P\in\mc P
\end{align*}
where the minimum is indeed achieved as our good rate function has compact sublevel sets and the set of all $\hat P'\in \mc P(\Xi')$ such that $\LP{\hat P'}{P'}\geq \epsilon$ is by definition closed\footnote{We denote here with $\LP{\cdot}{\cdot}$ the L\'evy-Prokhorov metric \cite{prokhorov1956convergence} which metricizes the weak topology on $\mc P(\Xi')$.} and does not contain the distribution $P'$.
Hence, the large deviation property immediately implies that the empirical probability measure $P_N'$ converges in probability to $P'$ with an increasing number of observations.
In fact, the rate function can be interpreted to quantify the exponential speed with which this convergence in probability takes place.

\subsection{Efficiency}
\label{ssec:efficiency}

We consider a prescriptive problem in which we attempt to learn the solution to the stochastic optimization problem stated in Equation (\ref{eq:sop}) from the noisy observational data described in Section \ref{sec:noisy-data}.
Let us denote with $P^\ml_N$ the maximum likelihood estimate for the unobserved probability distribution $P$ which as pointed out by  \citet{rigollet2018entropic} can under certain conditions be computed efficiently as $P^\ml_N=\arg\min_{P\in \mc P} W_1(P'_N, P)$.
A straightforward extension of the sample average formulation in Equation \eqref{eq:saa} to this noisy data would be to consider
\begin{equation}
  \label{eq:mle}
  z(P^\ml_N) \in \arg_\epsilon\inf_{z\in Z}\, \E{P^\ml_N}{\ell(z, \xi)}.
\end{equation}
Many other formulations based on different distributional estimates are evidently possible as well.
This naturally leads us to question if between these many alternative data-driven formulations one ought to be preferred over the other from a statistical point of view?
To answer this question more broadly we must of course first define precisely what constitutes a data-driven formulation and secondly agree on how its statistical performance should be quantified.
We follow the framework presented in \cite{vanparys2020data} and define a data-driven formulation as consisting of a predictor and prescriptor. 

\begin{definition}[Predictors and prescriptors]
  \label{def:dd_prediction}
  A measurable function~$\tilde c: Z \times\mc P(\Xi')\to\Re$ is called a predictor.
 A measurable function $\tilde z: \mc P(\Xi')\to Z$ is called a prescriptor if there exists a predictor~$\tilde c$ that induces~$\tilde z$ in the sense that $\tilde z(\hat P') \in \arg_\epsilon\inf_{z\in Z} \tilde c(z, \hat P')$ for all $\hat P'\in\mc P(\Xi')$. That is, we have $\tilde c(\tilde z(\hat P'), \hat P')-\epsilon< \tilde v(\hat P')\defn\inf_{z\in Z} \tilde c(z, \hat P')$ where we denote the function $\tilde v:\mc P(\Xi')\to\Re$ as the optimal value function of the formulation.
\end{definition}

The maximum likelihood formulation (\ref{eq:mle}) employs the cost predictor $\E{P^\ml_N}{\ell(z, \xi)}$ to prescribe its decision $z(P^\ml_N)$. However, the maximum likelihood is a mere point estimate of the unobserved probability distribution $P$. The maximum likelihood formulation can consequently be expected to suffer similar shortcomings as the sample average formulation in the noiseless regime.
That is, the cost budgeted for its prescribed decision is likely to disappoint out of sample. Here we say a formulation based on a predictor prescriptor pair $(\tilde c, \tilde z)$ disappoints if the event
\[
  P'_N\in \mc D(\tilde c, \tilde z; P) \defn \set{\hat P'\in \mc P(\Xi')}{c(\tilde z(\hat P'), P) > \tilde c(\tilde z(\hat P'), \hat P')}
\]
occurs with $c(z, P)=\E{P}{\ell(z, \xi)}$ the unknown out-of-sample cost.
Such disappointment events in which the actual cost of our decision, i.e., $c(\tilde z(P'_N), P)$, breaks the predicted cost or budget, i.e., $\tilde c(\tilde z(P'_N), P_N')$, are undesirable and should be avoided by the decision-maker. Consequently, we prefer formulations which keep the disappointment rates
\begin{equation}
  \label{eq:disappointment-rate}
  \varlimsup_{N\to\infty} \,\frac{1}{N} \log \Prob [ P'_N\in \mc D(\tilde c, \tilde z; P) ]
\end{equation}
as small as possible for all $P\in \mc P$.
We will only denote here formulations as feasible if their out-of-sample disappointment probability decays sufficiently fast, i.e., $\eqref{eq:disappointment-rate}\leq -r$.
Evidently, sufficiently fast disappointment probability decay can be achieved trivially by simply inflating the cost budgeted for each decision by some large nonnegative amount.
We would hence prefer those formulations which promise minimal biased long term cost prediction
$\lim_{N\to\infty}\tilde c(\tilde z(P'_N), P'_N)$ for all $P \in \mc P$. 

We consider here the family  of robust formulations defined by the predictor prescriptor pairs
\begin{equation}
  \label{eq:dro-optimal}
  \tilde c^\delta(z, P'_N) \defn  \sup \set{\E{Q}{\ell(z, \xi)}}{Q\in \mc P, ~I^\delta(P'_N, Q)\leq r}, \quad
  \tilde z^\delta(P'_N) \in  \textstyle\arg_\epsilon\inf_{z\in Z}\, \tilde c^\delta(z, P'_N)
\end{equation}
based on our smooth large deviation rate function defined in Equation (\ref{eq:smooth-rate-function}). 
We will show using a large deviation argument that this family dominates the very rich class of regular formulations.

\begin{definition}[Regular predictors and prescriptors]
\label{def:dd_prediction-regular}
A predictor~$\tilde c: Z \times\mc P(\Xi')\to\Re$ is called regular if it is continuous on $Z\times \mc P(\Xi')$. A prescriptor $\tilde z: \mc P(\Xi')\to Z$ is called  regular if it is continuous and there exists a regular predictor~$\tilde c$ that induces~$\tilde z$ in the sense that $\tilde z(\hat P') \in \arg_\epsilon\inf_{z\in Z} \tilde c(z, \hat P')$ for all $\hat P'\in\mc P(\Xi')$.
\end{definition}

{\color{black} For regular predictors we have that the observed random cost $\tilde c(\tilde z(P'_N), P'_N)$ converges almost surely to $\tilde c(\tilde z(O\star P), O\star P) $ as the empirical distribution $P_N'$ converges almost surely to $O\star P$ following \cite[Theorem 11.4.1]{dudley2018real} for every $P\in \mc P$.}
Remark that the class of all regular formulations is very rich as Definition \ref{def:dd_prediction-regular} imposes only mild structural restrictions. 
The Berge maximum theorem \cite[p.\ 116]{berge1997topological} indeed implies that the optimal value function $\tilde v(\hat P') = \min_{z\in Z} \tilde c(z, \hat P')$ of any regular formulation is a continuous function on $\mc P'(\Xi)$ already when the constraint set $Z$ is merely compact.
The correspondence $\hat P'\mapsto \tset{z\in Z}{\tilde c(z, \hat P')< \tilde v(\hat P')+\epsilon}$ of $\epsilon$-suboptimal solutions in a regular formulation is consequently lower semicontinuous \cite[Corollary 4.2.4.1]{bank1982non} for any $\epsilon>0$.
Hence, for formulations employing a convex predictor $\tilde c$ and $\mc P(\Xi')$ a compact set, an associated regular predictor can always be found \cite[Theorem 9.1.]{aubin2009set}.
Should a regular formulation admit unique optimal decisions, such decisions will constitute a regular prescriptor as well following \cite[p.\ 117]{berge1997topological}.
The need to focus on this restricted but nevertheless quite rich class of regular formulations is necessary due to the rough nature of the employed large deviation argument.

\begin{assumption}
  \label{ass:dro-uc}
  The cost function $c:Z\times\mc P\to\Re, ~(z, P)\mapsto \E{P}{\ell(z, \xi)}$ is continuous.
\end{assumption}

We remark that Assumption \ref{ass:dro-uc} is rather mild and is already satisfied when the loss function $\ell : Z\times \Xi\to\Re$ is merely bounded and uniformly continuous.

\begin{theorem}
  \label{cor:optimal-dro}
  Let Assumption \ref{ass:dro-uc} hold. Then,
  the family of predictor prescriptor pairs $( \tilde c^\delta , \tilde z^\delta)$ is feasible for any $\delta>0$, i.e.,
  \begin{equation}
    \label{eq:dro:feasibility}
    \varlimsup_{N\to\infty} \frac 1N \log \Prob \left[P'_N\in \mc D(\tilde c^\delta , \tilde z^\delta; P )\right] \leq -r \quad \forall P\in \mc P.
  \end{equation}
  Furthermore, consider any regular predictor prescriptor pair $( \tilde c , \tilde z)$ which satisfies
  \begin{equation}
    \label{eq:dro:feasibility-inflated}
    \varlimsup_{N\to\infty} \frac 1N \log \Prob \left[P'_N\in \mc D(\tilde c , \tilde z; P )\right] \leq -r ~~~\quad \forall P\in \mc P.
  \end{equation}
  Then, we have that for all $\epsilon>0$ there exists $0<\delta'$ so that any $0<\delta\leq \delta'$ we have almost surely
  {\color{black}
  \begin{equation}
    \label{eq:dro:optimality-inflated}
    \lim_{N\to\infty} \tilde c(\tilde z(P_N'), P_N') +3\epsilon \geq  \lim_{N\to\infty} \tilde c^\delta(\tilde z^\delta(P_N'), P_N')
  \end{equation}}
  when the noisy data is generated by any distribution $P'\in \mc P'$.
 \end{theorem}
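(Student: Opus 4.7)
The plan is to prove feasibility \eqref{eq:dro:feasibility} by the LDP upper bound exactly as in the feasibility part of Theorem \ref{cor:optimal-ht}, and then establish optimality \eqref{eq:dro:optimality-inflated} by a contradiction argument mirroring the optimality part of that proof, with regularity producing an open disappointment set in place of the inflation $\mc R^\epsilon$.

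For feasibility, I would observe that if $I^\delta(P', P) \leq r$ the true distribution $P$ lies in the feasible set of the supremum defining $\tilde c^\delta(\tilde z^\delta(P'), P')$, so $\tilde c^\delta(\tilde z^\delta(P'), P') \geq c(\tilde z^\delta(P'), P)$ and no disappointment occurs. Hence $\mc D(\tilde c^\delta, \tilde z^\delta; P) \subseteq \{P' : I^\delta(P', P) > r\}$, and the same closure argument as in the feasibility proof of Theorem \ref{cor:optimal-ht} upgrades this inclusion to $\cl \mc D(\tilde c^\delta, \tilde z^\delta; P) \subseteq \{P' : I(P', P) > r\}$; the LDP upper bound \eqref{eq:ldp_exponential_rates_ub:old} then yields the required decay rate of at most $-r$.

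For optimality, I would fix $\epsilon > 0$ and $P \in \mc P$ and assume for contradiction that \eqref{eq:dro:optimality-inflated} fails at $P$ for arbitrarily small $\delta$. Writing $z_0 = \tilde z(O \star P)$ and using the $\epsilon$-suboptimality of $\tilde z^\delta$, the failed bound sharpens to
\[
  \tilde c(z_0, O \star P) + \epsilon < \tilde c^\delta(z_0, O \star P) = \sup \set{c(z_0, Q)}{Q \in \mc P, ~ I^\delta(O \star P, Q) \leq r},
\]
so there is $Q^\star \in \mc P$ with $I^\delta(O \star P, Q^\star) \leq r$ and $c(z_0, Q^\star) > \tilde c(z_0, O \star P) + \epsilon$. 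Regularity of $(\tilde c, \tilde z)$ together with Assumption \ref{ass:dro-uc} makes $P' \mapsto c(\tilde z(P'), Q^\star) - \tilde c(\tilde z(P'), P')$ continuous, so an open neighborhood $U$ of $O \star P$ is contained in the disappointment set $\mc D(\tilde c, \tilde z; Q^\star)$. Because $I(\cdot, Q^\star)$ is good and lower semicontinuous, the infimum defining $I^\delta(O \star P, Q^\star)$ is attained by some $P^\star$ satisfying $D_{M'}(P^\star, O \star P) \leq \delta$ and $I(P^\star, Q^\star) \leq r$, and for sufficiently small $\delta$ we have $P^\star \in U$.

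The crux is then a convex combination argument in the spirit of Theorem \ref{cor:optimal-ht}: since $I(O \star Q^\star, Q^\star) = 0$ and $I(\cdot, Q^\star)$ is convex, $P_\lambda \defn \lambda \cdot (O \star Q^\star) + (1 - \lambda) \cdot P^\star$ satisfies $I(P_\lambda, Q^\star) \leq (1 - \lambda) r < r$ for every $\lambda \in (0, 1]$, and for small enough $\lambda$ the point $P_\lambda$ remains inside the open set $U$. Applying the LDP lower bound \eqref{eq:ldp_exponential_rates_lb:old} under $Q^\star$ then yields
\[
  \liminf_{N \to \infty} \frac{1}{N} \log \Prob_{Q^\star}[P'_N \in \mc D(\tilde c, \tilde z; Q^\star)] \geq -\inf_{P' \in U} I(P', Q^\star) \geq -(1 - \lambda) r > -r,
\]
which contradicts \eqref{eq:dro:feasibility-inflated} at the admissible $Q^\star \in \mc P$. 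The main obstacle is linking the $\delta$-ball condition hidden inside $I^\delta$ to an event of strictly positive exponential probability: regularity of $(\tilde c, \tilde z)$ together with Assumption \ref{ass:dro-uc} is what converts the single witness $P^\star$ certified by $I^\delta(O \star P, Q^\star) \leq r$ into the open set $U$ on which the LDP lower bound can bite, making the restriction to regular formulations indispensable.
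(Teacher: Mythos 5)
Your feasibility argument is essentially the paper's: show $\mc D(\tilde c^\delta, \tilde z^\delta; P)\subseteq\{P':I^\delta(P',P)>r\}$, pass to closures to get $\cl\mc D\subseteq\{P':I(P',P)>r\}$, then invoke the LDP upper bound. That part is fine.

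The optimality argument has a genuine gap. The theorem claims that for each $\epsilon>0$ there is a \emph{single} $\delta'>0$ such that \eqref{eq:dro:optimality-inflated} holds for all $0<\delta\leq\delta'$ and \emph{for all $P\in\mc P$ simultaneously}. You fix $P$ at the outset and run a contradiction argument, which at best delivers a threshold $\delta'(\epsilon,P)$ depending on $P$; nothing in your argument controls how $\delta'(\epsilon,P)$ behaves as $P$ ranges over $\mc P$. The paper avoids this by proving the uniform inequality
\[
  \lim_{\delta\to 0}\,\inf_{P'\in\mc P'}\,\bigl(\tilde c(\tilde z(P'),P')-\tilde c^\delta(\tilde z(P'),P')\bigr)\geq 0,
\]
where the infimum over $\mc P'=\set{O\star P}{P\in\mc P}$ is taken \emph{inside} the $\delta\to 0$ limit; the quantity being estimated is monotone in $\delta$, so the limit exists, and a single $\delta'$ then works across all of $\mc P'$. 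In the contradiction hypothesis the paper accordingly allows the witness $Q''(\delta)\in\mc P'$ (and hence the underlying $P$) to vary with $\delta$, and transfers the inequality from $Q''(\delta)$ to $Q'''(\delta)$ by continuity at a single chosen $\delta^\star$.

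A second, smaller issue: your open set $U=U(\delta)$ is the level set of $P'\mapsto c(\tilde z(P'),Q^\star(\delta))-\tilde c(\tilde z(P'),P')$, and so depends on $Q^\star(\delta)$. The claim ``for sufficiently small $\delta$ we have $P^\star\in U$'' requires that $U(\delta)$ not shrink faster than $\delta$ as $\delta\to 0$, which mere continuity of $c$, $\tilde c$, $\tilde z$ does not give you. The paper sidesteps this entirely: after fixing $\delta^\star$ it never localizes to a neighborhood $U$, but instead observes that $\mc D(\tilde c,\tilde z;P^\star)$ is itself open (by continuity of the strict inequality defining it), produces the convex combination $Q'(\lambda')$ lying inside that \emph{fixed} open set for $\lambda'$ small, and applies the LDP lower bound directly to $\mc D(\tilde c,\tilde z;P^\star)$. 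Adopting that structure would also repair your argument. The convex-combination step itself and the observation that regularity is what lets the LDP lower bound ``bite'' are both correct and match the paper's reasoning.
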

\begin{proof}
  We start by proving that our family of formulations is feasible and satisfies guarantee (\ref{eq:dro:feasibility}).
  To this end define the~sets
  \[
    \mc D^\delta(P)= \{\hat P'\in\mc P(\Xi') : \textstyle\sup_{z\in Z} c(z,P)-\tilde c^\delta(z,\hat P')>0\}\quad\text{and}\quad \mc B^\delta(P)=\{\hat P'\in\mc P(\Xi') : I^\delta(\hat P',P)>r\}.
  \]
  We may assume without loss of generality that we have $\mc D^\delta(P)\neq \emptyset$ for otherwise the out-of-sample disappointment~$\Prob[P'_N\in  \mc D(\tilde c^\delta , \tilde z^\delta; P ) \subseteq \mc D^\delta(P)]$ clearly vanishes for all~$N\geq 1$ and inequality (\ref{eq:dro:feasibility}) holds trivially.
  We will now show that~$\mc D^\delta(P)\subseteq \mc B^\delta(P)$. For the sake of contradiction, choose any~$\hat P'\in \mc D^\delta(P)$, and assume that~$I^\delta(\hat P',P)\leq r$. Thus, we have for some $z\in Z$ that
  \begin{equation*}
    c(z,P)> \tilde c^\delta(z,\hat P')=  \sup \set{ c(z, Q) }{ Q \in \mc P,~ I^\delta(\hat P', Q)\leq r } \geq c(z,P);
  \end{equation*}
a contradiction.
  As~$\hat P'\in \mc D^\delta(P)$ was chosen arbitrarily, we have thus shown that~$\mc D^\delta(P)\subseteq \mc B^\delta(P)$ and hence also $\cl \mc D^\delta(P)\subseteq \cl \mc B^\delta(P)$. Next we show that $\hat P'\in \cl \mc B^\delta(P) \implies I(\hat P', P)> r$.
  For the sake of contradiction assume that we have found $\hat P'\in \cl \mc B^\delta(P)$ for which $I(\hat P', P)\leq r$. There must exist a sequence $\hat P'_k\in \mc B^\delta(P)$ which converge to $\hat P'$ and hence $\LP{\hat P'_k}{\hat P'}$ tends to zero.
  However, from the definition of $\mc B^\delta(P)$ we have that in fact for all $Q'\in \mc P(\Xi')$ such that $\LP{Q'}{\hat P_k'}\leq \delta$ we have that $I(Q', P)> r$.
  Take now $k$ large enough such that $\LP{\hat P'_k}{\hat P'}  \leq \delta$ then we must have $I(\hat P', P)> r$; a contradiction.
  The above reasoning implies that
  \begin{align*}
    & \varlimsup_{N\to\infty}\frac 1N \log \Prob\left[ c(\tilde z^\delta(P_N'),P) > \tilde c^\delta(\tilde z^\delta(P'_N),P'_N) \right] \\
    &\hspace{2cm}\leq  \varlimsup_{N\to\infty}\frac 1N \log \Prob \left[ \sup_{z\in Z} c(z,P) - \tilde c^{\delta}(z,P_N') >0 \right] \leq - \inf_{\hat P'\in\cl \mc B^\delta(P)} ~ I(\hat P',P)  \leq -r
  \end{align*}
  establishing inequality (\ref{eq:dro:feasibility}) as $P\in \mc P$ was arbitrary.
  
  We will prove that for any regular predictor prescriptor pair $(\tilde c, \tilde z)$ which satisfies guarantee \eqref{eq:dro:feasibility-inflated}  we have
  \begin{equation}
    \label{eq:dro-uniform-inequality}
    \lim_{\delta\to 0} \inf_{P'\in \mc P'}\tilde c(\tilde z(P'), P') - \tilde c^\delta(\tilde z(P'), P') \geq 0.
  \end{equation}
  From inequality (\ref{eq:dro-uniform-inequality}) we can take $\delta'' > 0$ sufficiently small so that 
  \(
  \tilde c(\tilde z(P'), P') \geq \tilde c^{\delta''}(\tilde z(P'), P')-\epsilon\geq \tilde c^{\delta''}(\tilde z^{\delta''}(P'), P')-2\epsilon
  \)
  uniformly for all $P'\in \mc P'=\set{O\star P}{P\in\mc P}$. 
  {\color{black}Hence, 
    \[
      \tilde c(\tilde z(O\star P), O\star P)+ 2 \epsilon \geq \tilde c^{\delta''}(\tilde z^{\delta''}(O\star P), O\star P)  \quad \forall P\in \mc P.
    \]
    Assume now that $\LP{P_N'}{O\star P}\leq \delta' = \delta''/2$ occurs.
    The triangular inequality guarantees that $\LP{Q'}{P_N'}=\LP{P_N'}{Q'}\leq \delta'$, $\LP{P_N'}{O\star P}\leq \delta'$ implies $\LP{O\star P}{Q'}\leq 2\delta'=\delta''$.
    Hence, we have that
    $\tilde c^{\delta''}(\tilde z^{\delta''}(O\star P), O\star P) = \sup \tset{c(\tilde z^{\delta''}(O\star P), Q)}{Q\in \mc P,~ Q'\in \mc P(\Xi'),~\LP{O\star P}{Q'}\leq \delta'',~I(Q', Q)\leq r} \geq \sup \tset{c(\tilde z^{\delta''}(O\star P), Q)}{Q\in \mc P,~ Q'\in \mc P(\Xi'),~\LP{P'_N}{Q'}\leq \delta',~I(Q', Q)\leq r}\geq \tilde c^{\delta'}(\tilde z^{\delta'}(P'_N), P'_N)-\epsilon$. From \citet[Theorem 11.4.1]{dudley2018real} we know that $\lim_{N\to\infty}\LP{P_N'}{O\star P}=0$ almost surely and hence $\tilde c^{\delta''}(\tilde z^{\delta''}(O\star P), O\star P) \geq \lim_{N\to\infty} \tilde c^{\delta'}(\tilde z^{\delta'}(P'_N), P'_N)-\epsilon$ almost surely. Hence, we have almost surely 
    \[
      \lim_{N\to\infty} \tilde c(\tilde z(P_N'), P_N') + 2\epsilon = \tilde c(\tilde z(O\star P), O\star P)+ 2 \epsilon \geq \tilde c^{\delta''}(\tilde z^{\delta''}(O\star P), O\star P) \geq \lim_{N\to\infty} \tilde c^{\delta'}(\tilde z^{\delta'}(P'_N), P'_N)-\epsilon.
    \]
    The theorem now follows by remarking that $\tilde c^{\delta}(z, P')\leq \tilde c^{\delta'}(z, P')$ for all $z\in Z$, $P'\in \mc P(\Xi')$ and $0<\delta\leq \delta'$.
  }

  We will now establish inequality (\ref{eq:dro-uniform-inequality}) by showing that
  \begin{equation}
    \label{eq:dro-uniform-inequality-rho}
    \lim_{\delta\to 0} \inf_{P'\in \mc P'}\tilde c(\tilde z(P'), P') - \tilde c^\delta(\tilde z(P'), P') \geq -3\rho
  \end{equation}
  for any $\rho>0$.
  Assume for the sake of contradiction that $\lim_{\delta\to 0} \inf_{P'\in \mc P'}\tilde c(\tilde z(P'), P') - \tilde c^\delta(\tilde z(P'), P') < -3\rho$.
  There must hence exist a $\delta'>0$ such that for all $\delta\leq \delta'$ we have $\inf_{P'\in \mc P'}\tilde c(\tilde z(P'), P') - \tilde c^\delta(\tilde z(P'), P') < -2\rho$.
  Consequently, there exists a distribution $Q''(\delta)\in \mc P'$ such that
  \begin{align*}
    \tilde c(\tilde z(Q''(\delta)), Q''(\delta)) + 2\rho  < & ~\tilde c^\delta(\tilde z(Q''(\delta)), Q''(\delta)) \quad \forall \delta\leq \delta' \\[0.5em] 
     < & ~\sup \set{c(\tilde z(Q''(\delta)), P)}{P\in \mc P, \, I^\delta(Q''(\delta), P)\leq r} \quad \forall \delta\leq \delta'.
  \end{align*}
  Hence, there  exists for all $\delta\leq \delta'$ a distribution $P^\star(\delta)\in \mc P$ such that $I^\delta(Q''(\delta), P^\star(\delta)) \leq r$ and $\tilde c(\tilde z(Q''(\delta)), Q''(\delta)) + 2\rho < c(\tilde z(Q''(\delta)), P^\star(\delta))$.
  From the definition of the smooth rate function $I^\delta$ stated in Equation (\ref{eq:smooth-rate-function}) this implies that there exists an auxiliary sequence $Q'''(\delta)\in \mc P(\Xi')$ such that $I(Q'''(\delta), P^\star(\delta)) \leq r$ with $\LP{Q'''(\delta)}{Q''(\delta)}\leq \delta$ for all $\delta\leq \delta'$. Remark that
  \begin{align*}
    & \lim_{\delta\to 0}\tilde c(\tilde z(Q'''(\delta)), Q'''(\delta)) = \lim_{\delta\to 0}\tilde c(\tilde z(Q''(\delta)), Q''(\delta)),\\[0.5em]
    & \lim_{\delta\to 0} c(\tilde z(Q'''(\delta)), P^\star(\delta)) = \lim_{\delta\to 0} c(\tilde z(Q''(\delta)), P^\star(\delta))
  \end{align*}
  as the functions $c:Z\times \mc P \to \Re$, $\tilde c:Z\times \mc P(\Xi')\to\Re$ and $\tilde z:\mc P(\Xi')\to Z$ are continuous and $\LP{Q'''(\delta)}{Q''(\delta)}\leq \delta$ implies $\lim_{\delta\to 0}Q'''(\delta)=\lim_{\delta\to 0}Q''(\delta)$.
  Consequently, there exists a $\delta^\star \in (0, \delta']$ so that with $P^\star=P^\star(\delta^\star) \in \mc P$ and $Q^\star=Q'''(\delta^\star)\in \mc P(\Xi')$ we have both $$\abs{\tilde c(\tilde z(Q''(\delta^\star)), Q''(\delta^\star))-\tilde c(\tilde z(Q^\star), Q^\star)}< \rho ~ {\rm{and}} ~ \abs{c(\tilde z(Q''(\delta^\star)), P^\star) -c(\tilde z(Q^\star), P^\star)} < \rho.$$
  Hence, we have both $\tilde c(\tilde z(Q^\star), Q^\star) < c(\tilde z(Q^\star), P^\star)$ and $I(Q^\star, P^\star) \leq r$. Define the continuous function $Q': [0, 1]\to \mc P(\Xi'), ~\lambda \mapsto O\star P^\star\cdot \lambda + Q^\star\cdot (1-\lambda)$ and recall that $I(O\star P^\star, P^\star)=0$.
  As we have that the functions $c:Z\times \mc P \to \Re$, $\tilde c:Z\times \mc P(\Xi')\to\Re$ and $\tilde z:\mc P(\Xi')\to Z$ are continuous there hence exists $Q' = Q'(\lambda')$ for $\lambda' \in (0, 1]$ sufficiently small so that $I(Q', P^\star) \leq \lambda' I(O\star P^\star, P^\star) + (1-\lambda') I(Q^\star, P^\star) \leq (1-\lambda') \cdot r = r'   < r$ using the convexity of the rate function and $\tilde c(\tilde z(Q'), Q') < c(\tilde z(Q'), P^\star)$ where we use Assumption \ref{ass:dro-uc}.
  Consequently, we have that 
  \(
    Q'\in \mc D(\tilde c, \tilde x; P^\star)= \interior\mc D(\tilde c, \tilde x; P^\star)
  \)
  as again we remark for a final time that the functions $c:Z\times \mc P \to \Re$, $\tilde c:Z\times \mc P(\Xi')\to\Re$ and $\tilde z:\mc P(\Xi')\to Z$ are continuous. From the large deviation inequality (\ref{eq:ldp_exponential_rates_lb:old}) it follows now that 
  \[
    -r' \leq -I(Q', P^\star) <-\inf_{\hat P'\in \interior \mc D(\tilde c, \tilde x; P^\star)} I(\hat P', P^\star) \leq \varliminf_{N\to\infty} \frac 1N \log \Prob \left[P'_N\in \mc D(\tilde c , \tilde z; P^\star )\right]
  \]
  which is in direct contradiction with the feasibility inequality (\ref{eq:dro:feasibility-inflated}) as $r'<r$ which establishes our inequality (\ref{eq:dro-uniform-inequality-rho}) as $\rho>0$ was arbitrary.
\end{proof}

Inequality (\ref{eq:dro:feasibility}) guarantees that any formulation in our family is feasible.
Inequalities (\ref{eq:dro:feasibility-inflated}) and (\ref{eq:dro:optimality-inflated}) guarantee that any other feasible regular formulation is dominated (modulo the small constant $3\epsilon>0$) by members of our efficient family for all parameters $0<\delta\leq \delta'$ with $0<\delta'$ sufficiently small.
The previous theorem hence indicates that our family dominates any regular formulation in terms of balancing the desire for small out-of-sample disappointment as well as minimal bias under Assumption \ref{ass:dro-uc} and are thus {efficient}.
Additionally, we remark here that our focus on the asymptotic guarantees (\ref{eq:dro:feasibility}) and (\ref{eq:dro:feasibility-inflated}) and long term out-of-sample costs allows us to consider a much stronger sense of efficiency than classical minimax efficiency \citep{lehmann2006theory}.
{\color{black}Minimax efficiency would amount here to guaranteeing that
\(
  \lim_{N\to\infty} \tilde c(\tilde z(P_N'), P_N') - \lim_{N\to\infty} c^\delta(\tilde z^\delta(P_N'), P_N')\geq 0
\)
when the noisy data is generated by \textit{some distribution} $P'\in \mc P$.
Guarantee (\ref{eq:dro:optimality-inflated}) is much stronger in that we have
\(
\lim_{N\to\infty} \tilde c(\tilde z(P_N'), P_N') - \lim_{N\to\infty} c^\delta(\tilde z^\delta(P_N'), P_N')\geq -3\epsilon
\)
for \textit{any distribution} $P'\in \mc P$.}

In view of the previous discussion it is tempting to consider the data-driven formulation with predictor and prescriptor
\begin{equation}
  \label{eq:dro-zero}
  \tilde c^0(z, P'_N) \defn  \sup \set{\E{P}{\ell(z, \xi)}}{Q\in \mc P, ~I(P'_N, Q)\leq r}, \quad
  \tilde z^0(P'_N) \in  \arg\min_{z\in Z}\, \tilde c^0(z, P'_N)
\end{equation}
based directly on our rate function $I$ rather than its $\delta$-smoothed counterpart $I^\delta$. Van Parys et al.\ \cite{vanparys2020data} prove indeed that when given access to the noiseless data in Equation (\ref{eq:noiseless-observations}) with empirical distribution $P_N$ the appropriate rate function is precisely the entropic distance $\KL(P_N, P)$ and formulation \eqref{eq:dro-zero} is efficient in the noiseless setting as discussed in Section \ref{sec:introduction}.
However, recall again that for noisy data when the base measure $m'$ defined in Assumption \ref{ass:density} fails to be atomic, the ambiguity set $\set{Q\in \mc P}{I(P_N', Q) <\infty} = \emptyset$ is trivial for any empirical distribution $P_N'$ and consequently the associated data-driven formulation (\ref{eq:dro-zero}) is here obviously infeasible. Hence, considering a somewhat smoothed rate function $I^\delta$ instead of the rate function $I$ directly seems unavoidable when faced with noisy observational data.

We conclude here by providing an interesting connection between the actual rate function $I$ and the entropic optimal transport which sheds a light on the role entropic optimal transport plays in this noisy observational data regime. We defer the proof of the next result to Appendix \ref{sec:proof-theorem-ldp}.

\begin{theorem}
  \label{thm:ldp}
  Let Assumption \ref{ass:density} hold. The jointly convex rate function can be decomposed as
  \begin{equation}
    \label{eq:infimum_over_Q}
    I(\hat P', P) = \inf_{Q\in \mc P(\Xi)}\KL(Q, P) + \KL(\hat P', m') +  {W_1}(\hat P', Q) \geq 0.
  \end{equation}
\end{theorem}

In the proof of Theorem \ref{thm:ldp} we indicate that if $\hat P' \ll m'$ the infimum in Problem \eqref{eq:infimum_over_Q} is achieved at $Q^\star$ whose Radon-Nikodym derivative with respect to $P$ is 
\(
\tfrac{\d Q^\star}{\d P}(\xi) = \int_{\Xi'} [\tfrac{\exp(-d(\xi, \xi'))}{\int_{\Xi} \exp(-d (\xi'', \xi')) \d P(\xi'')}] \d \hat P'(\xi')
\)
for all $\xi\in \Xi$. 
The optimization variable $Q$ in Theorem \ref{thm:ldp} can be interpreted to represent the unobserved empirical distribution $P_N$ of the noiseless data points. With this interpretation in mind the first term $\KL(Q, P)$ ensures that $\Prob[P_N\approx Q]\asymp \exp(-N \cdot \KL(Q, P))$ following Sanov's theorem and accounts for the fact that the empirical distribution $P_N$ of the noiseless data may differ from unknown the distribution $P$ when the number of training data points is finite. The last two terms accounts for the fact that we only observe the empirical distribution $P'_N$ of the noisy data. One can show that this term quantifies indeed $\Prob[P'_N\approx \hat P'|P_N\approx Q] \asymp \exp(-N \cdot (\KL(\hat P', m')+{W_1}(\hat P', Q)))$.
Informally, we have using the law of total probability that
\begin{align*}
  \Prob[P'_N\approx \hat P'] = & \int \Prob[P'_N\approx \hat P'|P_N\approx Q] ~ \Prob[P_N\approx Q]\\[0.5em]
  \asymp & \int \exp(-N \cdot (\KL(\hat P', m')+{W_1}(\hat P', Q))) \exp(-N \cdot \KL(Q, P)) \\[0.5em]
  \asymp & \exp(-N \cdot \inf_{Q\in \mc P} \KL(Q, P) + [\KL(\hat P', m') +  {W_1}(\hat P', Q)] ).
\end{align*}
We remark that the entropic optimal transport term ${W_1}(\hat P', Q)$ is defined in Equation (\ref{def:entropic-wasserstein}) where its marginal transport cost $d$ is identified here as the logarithm of the density function of the noise corrupting our data as indicated in Assumption \ref{ass:density}. The term $\KL(\hat P', m')$ can be interpreted as a compensation term for the fact that any density function arbitrarily depends on the base measure $m'$ considered.

We remark that the optimization problem in Equation (\ref{eq:infimum_over_Q}) is identified as a so called \textit{unbalanced} optimal transport problem initially proposed by \cite{benamou2003numerical}. 
\citet{chizat2018scaling} propose scalable algorithms based on an entropic regularization scheme for such unbalanced optimal transport problems which have recently witnessed a flurry of interest, c.f., \citet{sejourne2019sinkhorn} and references therein.

{\color{black}
  \subsection{Consistency}
  \label{sec:consistency}

Strong out-of-sample guarantees such as those we impose in Equation (\ref{eq:dro:feasibility}) yield conservative formulations even when a large amount of data points are observed.
Indeed, even in the noiseless regime \citet{vanparys2020data} point out that imposing the out-of-sample guarantee (\ref{eq:dissapoint-gen}) with $r>0$ necessarily leaves any feasible formulation to be inconsistent, i.e.,
$\E{P}{\ell(z_{\KL, r}(P_N), \xi)}$ does not necessarily converge to $\inf_{z}\E{P}{\ell(z, \xi)}$.
Intuitively, this is a direct result from the fact that the employed ambiguity set $\set{Q\in \mc P}{\KL(P_N, Q)\leq r}$ does not reduce to $\{P\}$ when $N$ is large as the robustness radius $r$ is here constant.
Nevertheless, \citet{lam2019recovering, duchi2016statistics} show that if the probability of disappointment is only required to be remain bounded rather than to decay exponentially as in Equation (\ref{eq:dissapoint-kl}) a consistent formulation can be derived by simply reducing the robustness radius $r_N$ with increasing $N$ under mild technical conditions on the cost function $\ell$.

We remark here that by imposing the strong out-of-sample guarantee in Equation (\ref{eq:dro:feasibility}), our family of efficient formulations is also inconsistent as their associated ambiguity sets $\set{Q\in \mc P}{I^\delta(P'_N, Q)\leq r}$ do not shrink to $\{P\}$ even when $N$ is large as both here both $r$ as well as $\delta$ are constant. Motivated by the previous discussion we consider simply reducing $r_N$ and $\delta_N$ as $N$ grows and consider the robust formulation
\begin{equation}
  \label{eq:sample-dependent-dro}
  \tilde c_N(z, P'_N) \defn  \sup \set{\E{Q}{\ell(z, \xi)}}{Q\in \mc P, ~I^{\delta_N}(P'_N, Q)\leq r_N}, \quad \tilde z_N(P'_N) \in  \textstyle\arg_\epsilon\inf_{z\in Z}\, \tilde c_N(z, P'_N).
\end{equation}
We first show that the ambiguity set associated with the previously introduced formulation contains indeed $P$ with high probability as the number of observations is large and both $r_N$ and $\delta_N$ are reduced at appropriate rates with increasing $N$.

\begin{proposition}
  \label{prop:confidence}
  Assume that $P'=O\star P$ has bounded fourth moment and consider two nonincreasing sequences $r_N>0$ and $\delta_N>0$ with $r_N = \Omega(N^{\gamma-1})$ and $\delta_N = \Omega(N^{-\tfrac{\gamma'}{(2\dim(\Xi'))}})$ for some $0<\gamma'<\gamma<1$. 
  Then, 
  \[
    \lim_{N\to\infty} \Prob\left[\exists Q\in \mc P(\Xi') ~\st~ \LP{P_N'}{Q}\leq \delta_N,~I(Q, P)\leq r_N\right] = 1.
  \]
\end{proposition}

When there exists $Q\in \mc P(\Xi')$  so that $\LP{P_N'}{Q}\leq \delta_N$ and $I(Q, P)\leq r_N$, the distribution $P$ must be contained in the ambiguity set of the predictor $\tilde c_N$ which by definition implies that $c_N(z, P'_N)>\E{P}{\ell(z, \xi)}$ for any $z\in Z$.
The previous proposition hence immediately implies that 
\(
  \Prob \left[\E{P}{\ell(\tilde z_N(P'_N), \xi)} > \tilde c_N(\tilde z_N(P'_N), P'_N) \right]
\)
decays to zero, however, not necessarily exponentially fast. 

\begin{remark}[Finite Sample Guarantees]
  \label{rem:finite-sample-guarantees}
  We remark that Equation \eqref{eq:finite-sample-bound} in the proof of Proposition \ref{prop:confidence} provides also a finite sample guarantee on the probability of disappointment $\Prob \left[\E{P}{\ell(\tilde z_N(P'_N), \xi)} > \tilde c_N(\tilde z_N(P'_N), P'_N) \right]$. This enables the construction for any desired maximum disappointment probability $\beta\in (0,1]$ of two particular sequences $r^\star_N>0$ and $\delta^\star_N>0$ tending to zero at an appropriate speed (with in fact $r^\star_N = \mc O(N^{\gamma-1})$ and $\delta^\star_N = \mc O(N^{-\tfrac{\gamma'}{(2\dim(\Xi'))}})$ for some $0<\gamma'<\gamma<1$) so that $\Prob \left[\E{P}{\ell(\tilde z_N(P'_N), \xi)} > \tilde c_N(\tilde z_N(P'_N), P'_N) \right]\leq \beta$ for all $N\geq 1$.
  Furthermore, we remark that our efficiency guarantee in Theorem \ref{thm:ldp} on the formulation proposed in Equation \eqref{eq:dro-optimal} is asymptotic in nature. The result does hence not say after what number of samples our asymptotic guarantees are supposed to kick in. In particular, for small $\delta>0$, the associated ambiguity set $\set{Q\in \mc P}{I^\delta(P_N, Q)\leq r}$ may be empty and hence the formulation disappoints as by convention here $\E{P}{\ell(\tilde z^\delta(P'_N), \xi)} > \tilde c^\delta(\tilde z^\delta(P'_N), P'_N) = -\infty$.  If finite sample guarantees are a concern, we propose the robust formulation in Equation \eqref{eq:sample-dependent-dro} with $\delta_N = \delta/2+\delta^\star_N$ and $r_N=r+r^\star_N$ which trivially satisfies both the finite sample guarantee $$\Prob \left[\E{P}{\ell(\tilde z_N(P'_N), \xi)} > \tilde c_N(\tilde z_N(P'_N), P'_N) \right]\leq \beta \quad \forall N\geq 1$$ as $\tilde c_N(\tilde z_N(P'_N), P'_N)\geq \tilde c^{\delta/2}(\tilde z^{\delta/2}(P'_N), P'_N)$ and the asymptotic guarantee $$\varlimsup_{N\to \infty}\frac 1N  \log \Prob \left[\E{P}{\ell(\tilde z_N(P'_N), \xi)} > \tilde c_N(\tilde z_N(P'_N), P'_N) \right]\leq -r$$ as $\delta_N\geq \delta_N^\star$ and $r_N\geq r^\star_N$. Furthermore, the proposed formulation does not asymptotically impose any additional conservatism as indeed
  \(
  \lim_{N\to\infty} \tilde c^\delta(\tilde z^\delta(P_N'), P_N') \geq \lim_{N\to\infty} \tilde c_N(\tilde z_N(P_N'), P_N')
  \)
  due to $\lim_{N\to\infty}\delta_N = \delta/2<\delta$.
\end{remark}

When there exists $Q\in \mc P(\Xi')$  so that $\LP{P_N'}{Q}\leq \delta_N$ and $I(Q, P)=\KL(Q, P')\leq r_N$, in fact all distributions in the set $\set{Q\in \mc P}{O\star Q=P'}$ are contained in the ambiguity set of the predictor $\tilde c_N$. Proposition \ref{prop:confidence} establishes that with high probability the ambiguity set will not only contain $P$ but in fact all distributions in the set $\set{Q\in \mc P}{O\star Q=P'}$.
Unsurprisingly, consistency demands at a bare minimum that the distribution $P$ is identifiable from $P'$, i.e., we must impose $\set{Q\in \mc P}{O\star Q=P'}=\{P\}$ on the observational model $O$.
Not all observational models satisfy this assumption as certain types of measurement noise may lead to information loss. In an extreme case where $O_{\xi}=P'$ independent of $\xi$ then clearly all information regarding the noiseless data is lost and in fact $\set{Q\in \mc P}{O\star Q=P'}=\mc P$.
We remark that our efficiency notion is independent of identifiability. We prove here consistency for the additive error setting under a mild assumption on the loss function $\ell$ which is slightly stronger than the condition imposed in Assumption \ref{ass:dro-uc}.

\begin{assumption}[Identifiability]
  \label{ass:identifiability}
  Let $\varphi_E: \Re^{\dim(\Xi)}\to \mathrm C$,
\(
t \mapsto \int \exp(i\iprod{t}{e}) \,\d E(e) 
\)
be the characteristic function of the error distribution $E$.
 Assume that $\varphi_E$ has no roots, i.e., $\varphi_E(t)\neq 0$ for $t\in \Re^{\dim(\Xi)}$.
\end{assumption}

\begin{assumption}[Bounded Lipschitz Loss]
  \label{ass:bounded-lipschitz}
  Assume that the loss function $\ell$ is uniformly bounded and Lipschitz, i.e., we have $L>0$ and $\mc L>0$ so that
  $\sup_{\xi\in \Xi} \abs{\ell(z, \xi)}\leq L$ as well as $\sup_{z\in Z, \xi_1\neq\xi_2\in \Xi}\tfrac{\abs{\ell(z, \xi_1)-\ell(z, \xi_2)}}{\norm{\xi_1-\xi_2}}\leq \mc L$ for all $z\in Z$.
\end{assumption}

Recall that any distribution in $\mc P(\Re^{\dim(\Xi)})$ is uniquely determined by its characteristic function and that the characteristic function of a convolution between two distributions is given as the product of their characteristic functions \citep[Chapter 6]{karr1993probability}. Assumption \ref{ass:identifiability} guarantees that from the noisy distribution $P'=O^{AE}\star P$ we can identify the noiseless distribution $P$ via its characteristic function $\varphi_{P} = \varphi_{P'}/\varphi_E$.
Remark that the characteristic function of a zero mean normal distribution with variance $\sigma^2$ is given as
\(
\varphi_{N(0, \sigma^2 I)}(t) = \exp(-\norm{\sigma t}_2^2/2)>0
\)
and hence Example \ref{ex:gaussian-noise} satisfies Assumption \ref{ass:identifiability}.

\begin{theorem}[Consistency]
  \label{thm:consistency}
  Consider an additive error model $O^{AE}$ for which Assumption \ref{ass:identifiability} is satisfied.
  Assume that $P'=O^{AE}\star P$ has bounded fourth order moment, the loss $\ell$ satisfies Assumption \ref{ass:bounded-lipschitz} and assume that $r_N$ and $\delta_N$ decay to zero with $r_N = \Omega(N^{\gamma-1})$ and $\delta_N = \Omega(N^{-2\gamma'/\dim(\Xi')})$ for some $0<\gamma'<\gamma<1$.
  Then,
  \[
    \lim_{N\to\infty} \Prob\left[\E{P}{\ell(\tilde z_N(P'_N), \xi)}\leq \inf_{z\in Z}~\E{P}{\ell(z, \xi)} + 2\epsilon\right]=1.
  \]
\end{theorem}
}

\section{Finite Formulations}
\label{sec:tract-pred-form}

Our family of efficient robust formulations in Equation \eqref{eq:dro-optimal} is stated in terms of a saddle-point problem which may be difficult to solve in general. Indeed, even the original stochastic optimization problem \eqref{eq:sop} may not be easy to solve.
For the sake of simplicity we assume here that nothing is known about $P$ and hence $\mc P=\mc P(\Xi)$. We remark though that the presented analysis generalizes to the case where $\mc P$ is a convex subset of $\mc P(\Xi)$ with only minor modifications.
If the loss function $\ell(z, \xi)$ is convex in the decision variable $z$ for any $\xi$ then the robust formulations in Equation \eqref{eq:dro-optimal} only require the solution of a convex optimization problem in the decision variable. Whether or not the convex optimization problem characterizing the optimal decision is tractable depends on whether the prediction function $\tilde c^\delta(z, P'_N)$ can be evaluated efficiently which we will discuss now in more depth.

The maximization problem characterizing the prediction function $\tilde c^\delta(z, P'_N)$ in Equation (\ref{eq:dro-optimal}) is convex in the distribution $P$.
Indeed, the closed metric balls
\(
  \tset{\hat P'\in \mc P(\Xi')}{\LP{\hat P'}{P'_N}\leq \delta} \defn \tset{\hat P'\in \mc P(\Xi')}{P'_N(B)\leq \hat P'(B^\delta)+\delta~~ \forall B\subseteq \Xi'}
\)
induced by any metric, and hence in particular the L\'evy-Prokhorov distance, are convex. Consequently, we have that the prediction function is characterized here as
\begin{equation}
  \label{eq:naive-representation}
  \begin{array}{rl}
    \tilde c^\delta(z, P'_N) = \sup & \E{Q}{\ell(z, \xi)}\\[0.3em]
    \st & Q\in \mc P(\Xi), ~\hat P' \in \mc P(\Xi'),\\[0.3em]
        & I(\hat P', Q)\leq r, ~\LP{\hat P'}{P'_N}\leq \delta.
  \end{array}
\end{equation}
However, even in the case where both event sets $\Xi$ and $\Xi'$ have finite cardinality the terminal L\'evy-Prokhorov constraint is characterized using $2^{\abs{\Xi'}}$ linear inequalities which becomes prohibitive even for moderately sized event sets $\Xi'$. This observation may perhaps explain why the L\'evy-Prokhorov metric has not been considered extensively in the robust optimization literature. The sole exception is the work of \citet{erdougan2006ambiguous} who proposes a stochastic approximation method for chance constrained optimization problems with L\'evy-Prokhorov ambiguity.

\subsection{Strassen Representation}
\label{sec:strass-repr}

Surprisingly, by exploiting a classical representation result of \citet{strassen1965existence} the L\'evy-Prokhorov metric  need not result in intractable formulations. To the best of our knowledge, the application of the Strassen representation to derive a tractable reformulation of the L\'evy-Prokhorov distance is novel. 

\begin{theorem}[Strassen Representation]
  \label{thm:strassen-representation}
  Let $\Xi'_N={\rm{supp}}(P'_N)$ denote the support of $P'_N$. Then,
  \begin{equation}
    \label{eq:strassen-representation}
    \begin{array}{rl@{\hspace{7em}}l}
      \tilde c^\delta(z, P'_N) = \sup & \E{Q}{\ell(z, \xi)} & \\[0.3em]
      \st & Q\in \mc P(\Xi), ~\hat P' \in \mc M_+(\Xi'), ~T\in \mc M_+(\Xi'\times\Xi_N'), & \\[0.3em]
                                      & I(\hat P', Q)\leq r, & [{\rm{DV}}:~ \beta \geq 0] \\[0.3em]
                                      & \Pi_{\Xi'} T=\hat P', & [{\rm{DV}}:~ u:\Xi'\to\Re]\\[0.3em]
                                      & \Pi_{\Xi'_N}T=P'_N, & [{\rm{DV}}:~ v:\Xi'_N\to\Re]\\[0.3em]
                                      & \int_{\Xi'\times\Xi'_N}\one{\norm{\xi'-\xi'_i}\leq \delta}\d T(\xi', \xi_i')\geq 1-\delta. &  [{\rm{DV}}:~ \gamma\geq 0]
    \end{array}
  \end{equation}
\end{theorem}
\begin{proof}
  As we assume that $\int_{\Xi'} P'_N(\xi')=1$, we have that
  $T \in \mc M_+(\Xi'\times\Xi_N'),~\Pi_{\Xi'_N}T=P'_N \implies T\in \mc P(\Xi'\times\Xi_N')$. Similarly, $\hat P' \in \mc M_+(\Xi'),~T\in \mc M_+(\Xi'\times\Xi_N'),~\Pi_{\Xi'_N}T=P'_N,~ \Pi_{\Xi'} T=\hat P' \implies \hat P'\in \mc P(\Xi'_N)$.
  The rest of the statement is a simple consequence of the more general equivalence
  \begin{align*}
    P'_N(B)\leq \hat P'(B^{\delta_1})+\delta_2~~ \forall B\subseteq \Xi'\iff\left\{
    \begin{array}{l}
      \exists T\in \mc P(\Xi'\times\Xi_N') ~\st~\Pi_{\Xi'} T=\hat P', ~\Pi_{\Xi'_N}T=P'_N,\\
      \textstyle\int_{\Xi'\times\Xi'_N}\one{\norm{\xi'-\xi'_i}\leq \delta_1}\d T(\xi', \xi_i')\geq 1-\delta_2
    \end{array}\right.
  \end{align*}
  for any $\delta_1, \delta_2\geq 0$ \cite[Theorem 11]{strassen1965existence}. 
\end{proof}

The equivalent formulation stated in Theorem \ref{thm:strassen-representation} can be solved efficiently using an off-the-shelf exponential cone optimization solver \citep{dahl2021primal} when both event sets $\Xi$ and $\Xi'$ have finite cardinality. Perhaps the only complication is that the size of this equivalent formulation counts $\mc O(\abs{\Xi'}\abs{\Xi'_N})$ variables for $\mc O(\abs{\Xi'}+\abs{\Xi'_N})$ constraints which may limit its practicality when both $\abs{\Xi'}$ and $\abs{\Xi_N'}$ are large.

We now indicate that even if the event set $\Xi'$ is not finite Problem (\ref{eq:strassen-representation}) still admits a finite reduction. Consider a finite partition of $\Xi'$ which is generated by the closed balls around the observed data points, i.e.,
\[
  \Xi'_k = {\displaystyle\cap_{i=1}^{\abs{\Xi'_N}}} \set{\xi'\in \Xi'}{\begin{array}{rl} \norm{\xi'-\xi'_i}\leq \delta & {\rm{if}~} b(k, i)=1\\\norm{\xi'-\xi'_i}> \delta & {\rm{if}~} b(k, i)=0\end{array}}
\]
for $0\leq k\leq K-1$ with $K = 2^{\abs{\Xi'_N}}$ and $b(k, i)$ the $i$th digit of the natural number $k$ written down in binary notation. Clearly, we have $\Xi'_k\cap \Xi'_{k'}=\emptyset$ if $k\neq k'$ and $\cup_{k=0}^{K-1} \Xi'_k=\Xi'$. Note that it may happen that $\Xi'_k=\emptyset$ for some $0\leq k\leq K-1$. We defer the proof of the following result to Appendix~\ref{sec:proof-lemma-ref}.

\begin{lemma}
  \label{lemma:strassen-representation-finite}
  Let $\Xi'_N={\rm{supp}}(P'_N)$ denote the support of $P'_N$ and let $\Xi$ be finite. Then,
  \begin{equation}
    \label{eq:strassen-representation-finite}
    \begin{array}{rl}
      \tilde c^\delta(z, P'_N) = \tilde c_f^\delta(z, P'_N) \defn \sup & \sum_{\xi\in \Xi} \ell(z, \xi) q_\xi\\[0.3em]
      \st & q_\xi\geq 0 ~\forall \xi\in \Xi, ~\hat p' \in \Re^K, ~t\in \Re^{K\times \abs{\Xi'_N}}\\[0.3em]
                                      & \sum_{k=0}^{K-1}\hat p'_k \log \left(\tfrac{\hat p'_k}{\sum_{\xi\in \Xi} O_\xi(\Xi'_k)\cdot q_\xi}\right)\leq r,  \\[0.3em]
                                      & \sum_{i=1}^{\abs{\Xi'_N}} t_{k,i}=\hat p'_k \quad 0\leq k\leq K-1,  \\[0.3em]
                                      & \sum_{k=0}^{K-1} t_{k,i}=P_N'(\xi'_i) \quad  1\leq i\leq \abs{\Xi'_N},\\[0.3em]
                                      & \sum_{i=1}^{\abs{\Xi'_N}} \sum_{k=0}^{K-1} b(k, i) t_{k,i}\geq 1-\delta.
    \end{array}
  \end{equation}
\end{lemma}

It should be remarked although Problem (\ref{eq:strassen-representation-finite}) is finite it may be very large as indeed we have that $K=\mc O(2^{\abs{\Xi'_N}})$. This clearly limits the practical usefulness of Problem (\ref{eq:strassen-representation-finite}) to situations where the number of observed distinct data points $\Xi'_N$ is very small. Surprisingly, using a dual equivalent formulation instead the computational burden of evaluating $c^\delta(z, P_N')$ can be further reduced to $\mc O(\abs{\Xi_N'})$ variables and $\mc O(1)$ constraints which is the subject of the following section.

\subsection{Dual Representation}

Consider the minimization problem
\begin{equation}
  \label{eq:dual-formulation}
  \begin{array}{r@{~~}l}

    {\displaystyle\inf} & \displaystyle \beta r\! -\!\int_{\Xi'} v(\xi') \, \d P'_N(\xi')  + \gamma (\delta-1) + \max_{\xi\in\Xi}\! \left[\ell(z, \xi) + \beta \!\!\int_{\Xi'}\!\!\! \exp\left(\frac{u(\xi')}{\beta}-1\right)\d O_\xi(\xi')\right]\\[0.3em]
    \st & \beta\geq 0, ~u:\Xi'\to\Re, ~v:\Xi'_N\to\Re,~\gamma\geq 0,\\[0.3em]
                        & \gamma \one{\norm{\xi'-\xi'_i}  \leq \delta}+ v(\xi'_i)\leq u(\xi')  \quad \forall \xi'\in \Xi', \,\xi'_i\in \Xi'_N.
  \end{array}
\end{equation}

We label the previous problem as the dual problem of the primal problem (\ref{eq:strassen-representation}) which is nontrivial if $\tilde c^\delta(z, P'_N)$ is finite. We remark that the next duality result does not assume $\Xi'$ to have finite cardinality. We defer its proof to Appendix \ref{sec:proof-theor-dual}.

\begin{theorem}[Dual Representation]
  \label{thm:dual-formulations}
  Let $\Xi$ be finite. Suppose that Problem \eqref{eq:naive-representation} enjoys the Slater constraint qualification condition, i.e., there exist $Q_s\in \mc P(\Xi)$, $\hat P'_s\in \mc P(\Xi')$ with $I(\hat P'_s, Q_s)<r$ and $\LP{\hat P'_s}{P'_N}<\delta$. Then, we have
  \(
  \tilde c^\delta(z, P'_N) = \eqref{eq:dual-formulation}.
  \)
\end{theorem}

First note that the dual characterization of the prediction function in Theorem \ref{thm:dual-formulations} allows for the convex saddle-point formulation \eqref{eq:dro-optimal} to be solved as an ordinary convex minimization problem as the dual formulation (\ref{eq:dual-formulation}) is jointly convex in $z\in Z$ and the dual variables. This may be desirable in practice as saddle-point optimization solvers are typically not as mature as solvers addressing standard optimization problems. This dual formulation counts $\mc O(\abs{\Xi_N'}+\abs{\Xi'})$ variables for $\mc O(\abs{\Xi'}\abs{\Xi'_N})$ constraints.

We now show that this can be further reduced to $\mc O(\abs{\Xi_N'})$ variables for $\mc O(1)$ constraints and in fact allows for a finite optimization characterization independent of the cardinality of the event set $\Xi'$. Consider $v:\Xi'_N\to\Re$ and define $J(v)\defn\abs{\set{v(\xi')}{\xi'\in \Xi'_N}}$. Let $\xi'_{[j]}(v)$ for $j\in [1,\dots, J(v)]$ denote any partition of the observed data points in $\Xi'_N$ such that the dual variables $v(\xi'_{[j]}(v))$ are non-increasing. That is, we have $\Xi'_N=[\xi'_{[1]}(v), \dots, \xi'_{[J(v)]}(v)]$ and
\( v(\tilde{\xi}')\geq v(\hat{\xi}') \) for all $\tilde{\xi}'\in \xi'_{[j]}(v), \hat{\xi}'\in \xi'_{[j']}(v)$ with $1\leq j<j'\leq J(v)$ as well as \( v(\tilde{\xi}')= v(\hat{\xi}') \) for all $\tilde{\xi}', \hat{\xi}'\in \xi'_{[j]}(v)$ with $1\leq j\leq J(v)$.
We may now partition the event set $\Xi'$ using the sets $\Xi'_0(v)=\emptyset$, $\Xi'_j(v) = \tset{\xi'\in \Xi'}{\min \tset{||{\xi'-\tilde{\xi}'}||}{\tilde{\xi}'\in \xi'_{[j]}(v)}\leq \delta} \setminus \cup_{l\in [1, \dots, j-1]} \Xi'_l(v)$ for $j\in [1, \dots, J(v)]$, and $\Xi'_{J(v)+1}(v)= \Xi' \setminus \cup_{l\in [1, \dots, J(v)]} \Xi'_l(v)$.

\begin{example}
  \label{ex:partition}
  Let $\Xi'_N = [\xi'_1, \dots, \xi'_5]$ and consider $v:\Xi'_N\to\Re$ given as $v(\xi')=1$ if $\xi'=\xi'_1$, $v(\xi')=3$ if $\xi'=\xi'_2$, $v(\xi')=4$ if $\xi'=\xi'_3$, $v(\xi')=2$ if $\xi'=\xi'_4$ and $v(\xi')=3$ if $\xi'=\xi'_5$.
  Then, $J(v)=4$ and $\xi'_{[1]}(v)=\{\xi'_3\}$, $\xi'_{[2]}(v)=\{\xi'_2,\xi'_5\}$, $\xi'_{[3]}(v)=\{\xi'_4\}$, $\xi'_{[4]}(v)=\{\xi'_1\}$. The associated partition of the event set $\Xi'=\cup_{j=1}^{J(v)+1} \Xi'_j(v)$ is given in Figure \ref{fig:partition}.
\end{example}

  \begin{figure}
    \centering
    \includegraphics[width=0.5\textwidth]{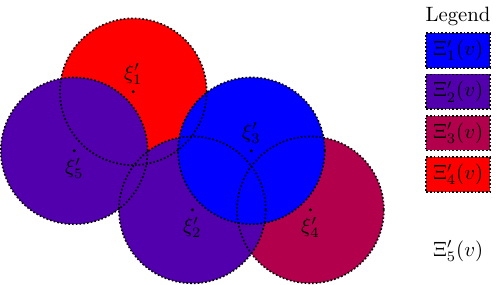}
    \caption{The associated partition of the event set $\Xi'_N=\cup_{j=1}^{J(v)+1} \Xi'_j(v)$ associated with the vector $v$ given in Example \ref{ex:partition}.}
    \label{fig:partition}
  \end{figure}

\begin{lemma}[Finite Convex Dual Representation]
  \label{lemma:dual-representation-finite}
  The dual problem admits the following convex formulation:
  \begin{equation}
    \label{eq:finite-dual-representation}
    \begin{array}{r@{~~}l}
      (\ref{eq:dual-formulation}) = {\displaystyle\inf} & \beta r\! -\!\sum_{j=1}^{J(v)} v(\xi'_{[j]}) P'_N(\xi'_{[j]})  + \gamma (\delta-1)  + \max_{\xi\in\Xi}~ \Big[ \ell(z, \xi) \\[0.75em]
                                                        & \hspace{1.7em}  + \sum_{j=1}^{J(v)} O_\xi(\Xi'_j(v))\frac{\beta}{e} \exp\left(\frac{\max(v(\xi'_{[1]}(v)), v(\xi'_{[j]}(v))+ \gamma)}{\beta}\right) + O_\xi(\Xi'_{J(v)+1}(v))\frac{\beta}{e} \exp\left(\frac{v(\xi'_{[1]}(v))}{\beta}\right)\Big]\\[0.75em]
      \st & \beta\geq 0,  ~v:\Xi'_N\to\Re,~\gamma\geq 0.
    \end{array}
  \end{equation}

\end{lemma}
\begin{proof}
Note that the optimal dual variable $u^\star(\gamma, v)$ in the dual formulation (\ref{eq:dual-formulation}) can be determined explicitly as a function of the dual variables $v$ and $\gamma$. 
  It is can be easily verified that optimal dual variable is of the form
  \[
    u^\star(\xi; \gamma, v) = \begin{cases}
      \max(v(\xi'_{[1]}(v)), v(\xi'_{[1]}(v))+ \gamma)& {\rm{if}} ~ \xi'\in \Xi'_1(v),\\
      \vdots & \vdots\\
      \max(v(\xi'_{[1]}(v)), v(\xi'_{[J]}(v))+ \gamma) & {\rm{if}} ~ \xi'\in \Xi'_{J(v)}(v),\\
      v(\xi'_{[1]}(v)) & {\rm{if}} ~ \xi'\in \Xi'_{J(v)+1}(v)\\
    \end{cases}
  \]
  and is hence a piecewise constant function on the partition $\{\Xi'_j(v)\}_{j=1}^{J(v)+1}$. Indeed, any other feasible solution $u$ in formulation (\ref{eq:dual-formulation}) is pointwise larger, i.e., $u^\star(\xi; \gamma, v) \leq u(\xi)$ for all $\xi\in \Xi'$. As the objective function in formulation (\ref{eq:dual-formulation}) is increasing (in the pointwise order) in $u$, the choice $u^\star$ is without loss of optimality. Consequently, the ultimate term in the objective of the dual formulation (\ref{eq:dual-formulation}) can be decomposed as the finite sum
  \begin{align*}
     \textstyle\beta \int_{\Xi'}&\textstyle\exp\left(\frac{u^\star(\xi', \gamma, v)}{\beta}-1\right)\d O_\xi(\xi') \\[0.5em]
   =& \textstyle \sum_{j=1}^{J(v)} O_\xi(\Xi'_j(v))\beta \exp\left(\tfrac{\max(v(\xi'_{[1]}(v)), v(\xi'_{[j]}(v))+ \gamma)}{\beta} -1\right) 
    \textstyle + O_\xi(\Xi'_{J(v)+1}(v))\beta \exp\left(\tfrac{v(\xi'_{[1]}(v))}{\beta}-1\right).
\end{align*}
The convexity of the reduced formulation is a consequence of the fact \citep[Theorem 1.31]{pennanen2019introduction} that partially minimizing a convex formulation (here formulation (\ref{eq:dual-formulation})) with respect to any variable (here the variable $u$) results in a reduced formulation (here formulation (\ref{eq:finite-dual-representation})) which is convex itself.%
\end{proof}

The dual representation (\ref{eq:finite-dual-representation}) can be solved as a finite convex optimization problem using a (stochastic) black-box optimization method \cite{nesterov1998introductory} as long as we have a (stochastic) oracle which can access the probabilities $O_\xi(\Xi'_j(v))=\int_{\Xi'} \one{\Xi'_j(v)} \, \d O_\xi(\xi') = \int_{\Xi'} \one{\Xi'_j(v)} \exp(-d(\xi, \xi'))\,\d m'(\xi')$ for all $\xi\in \Xi$ and $j \in [1, \dots, J(v)+1]$. The complexity of our efficient formulation \eqref{eq:naive-representation} is hence reduced to the complexity of integration of the noise distribution over certain intersections and unions of norm balls; see also Figure \ref{fig:partition}.

\section{Conclusions}

We introduce a novel class of distributionally robust optimization formulations which are efficient when making decisions based on noisy data in the sense pioneered by \cite{vanparys2020data}.
Their associated ambiguity sets are based on both an entropic divergence as well as an entropic optimal transport distance and can be interpreted as a large deviation rate function.
Unlike in the noiseless regime, we show that this rate function too irregular and a certain amount of smoothing is necessary.
Finally, we show that our efficient formulations are often tractable by exploiting a classical result by \citet{strassen1965existence} and considering a dual representation.

\bibliography{references}

\appendix
\clearpage

\section{Proofs}

\subsection{Proof of Theorem \ref{thm:ldp}}
\label{sec:proof-theorem-ldp}

\begin{proof}
  Let $T(P)$ be the joint distribution of the random variables $(\xi_i, \xi'_i)$ where Assumption \ref{ass:density} holds.
  We will show momentarily that
  \(
  \KL(\hat P', O\star P) = \min \tset{\KL(T', T(P))}{T'\in \mc P(\Xi\times\Xi'), ~\Pi_{\Xi'}T'=\hat P'}.
  \)
  The identity \eqref{eq:infimum_over_Q} follows then from Lemma \ref{sec:proof-theor-refthm:l}. The joint convexity of the rate function $I(\hat P', P)=\KL(\hat P', O\star P)$ follows from the joint convexity of the relative entropy function and the linearity of $O\star P$ in $P$.
  It remains to show that
  \(
  \KL(\hat P', O\star P) = \min \tset{\KL(T', T(P))}{T'\in \mc P(\Xi\times\Xi'),~\Pi_{\Xi'}T'=\hat P'}.
  \)

  Consider first the case where $\hat P' \not \ll O\star P$. In this case we have by definition $\KL(\hat P', O\star P)=\infty$.
  Furthermore, $\hat P'=\Pi_{\Xi'}T' \not \ll \Pi_{\Xi'}T(P) = O\star P\implies T'\not \ll T(P)$ and consequently $\min \tset{\KL(T', T(P))}{T'\in \mc P(\Xi'\times\Xi),~\Pi_{\Xi'}T'=\hat P'}=\infty$ as well.
  
  Consider now the case where $\hat P' \ll O\star P$. We will show (i) that here \(
  \KL(\hat P', O\star P) = \min \tset{\KL(T', T(P))}{T'\in \mc P(\Xi'\times\Xi),~\Pi_{\Xi'}T'=\hat P'} = \KL(T'^\star, T(P))
  \)
  and (ii) the minimum is achieved at $T'^\star\ll T(P)$ explicitly characterized as
  \begin{equation}
    \label{eq:explicit-Tstar}
    \frac{\d T'^\star}{\d T(P)}(\xi, \xi')= \frac{\d \hat P'}{\d (O\star P)}(\xi') \quad \forall \xi\in \Xi, ~\xi'\in \Xi'.
  \end{equation}

  First, observe that we have here the inequalities
  \begin{align}
     & \KL(\hat P', O\star P) \nonumber\\
    = & \int_{\Xi'} \log\left( \frac{\d \hat P'}{\d (O\star P)}(\xi')\right) \frac{\d \hat P'}{\d (O\star P)}(\xi') \, \d (O\star P)(\xi')\nonumber\\
    = & \int_{\Xi'}  \left[\max_{v\in \Re} v \frac{\d \hat P'}{\d (O\star P)}(\xi') - \exp(v-1)\right] \d (O\star P)(\xi')\nonumber\\
    = & \max_{v:\Xi'\to\Re} \int_{\Xi'} v(\xi') \d \hat P'(\xi') - \int_{\Xi'} \exp(v(\xi')-1) \d (O\star P)(\xi')\nonumber\\
    = & \max_{v:\Xi'\to\Re} \int_{\Xi'} v(\xi') \d \hat P'(\xi') - \int_{\Xi\times \Xi'}\exp(v(\xi')-1)\d T(P)(\xi, \xi')\nonumber\\
    = & \max_{v:\Xi'\to\Re} \int_{\Xi'} v(\xi') \d \hat P'(\xi') - \int_{\Xi\times \Xi'}\max_{R\geq 0} R v(\xi') - R \log(R) \d T(P)(\xi, \xi') \nonumber\\
    = & \max_{v:\Xi'\to\Re} \min_{R:\Xi\times \Xi'\to\Re_+} \int_{\Xi'} v(\xi') \d \hat P'(\xi') - \int_{\Xi\times \Xi'} R(\xi, \xi') v(\xi') - R(\xi, \xi') \log(R(\xi, \xi')) \d T(P)(\xi, \xi')\nonumber\\
    \leq & \max_{v:\Xi'\to\Re} \min_{T' \ll T(P)\in \mc M_+(\Xi\times \Xi')} \int_{\Xi'} v(\xi') \d \hat P'(\xi') - \int_{\Xi\times \Xi'} v(\xi') \d T'(\xi, \xi') + \log\left(\frac{\d T'}{\d T(P)}(\xi, \xi')\right)  \d T'(\xi, \xi')\nonumber\\
    \leq & \min_{T' \ll T(P)\in \mc M_+(\Xi\times \Xi')} \max_{v:\Xi'\to\Re}  \int_{\Xi'} v(\xi') \d \hat P'(\xi') - \int_{\Xi\times \Xi'} v(\xi') \d T'(\xi, \xi') + \log\left(\frac{\d T'}{\d T(P)}(\xi, \xi')\right)  \d T'(\xi, \xi')\nonumber\\
    = & \min_{T' \ll T(P)\in \mc M_+(\Xi\times \Xi'), ~\Pi_{\Xi'}T'=\hat P'} \log\left(\frac{\d T'}{\d T(P)}(\xi, \xi')\right)  \d T'(\xi, \xi') \nonumber\\
  \label{ineq:KL-divergence}  = &\min \tset{\KL(T', T(P))}{T'\in \mc P(\Xi\times\Xi'), ~\Pi_{\Xi'}T'=\hat P'}.
  \end{align}
  The first equality follows from the fact that $\hat P'\ll O\star P$ and the definition of the Kullback-Leibler divergence where we remark that here $\int \d \hat P'(\xi') = \int \d (O\star P)(\xi') = 1$.
  The second equality follows from the convex conjugate identity $\max_{v} a v - \exp(v-1) = a v^\star(a) - \exp(v^\star(a)-1) = a \log\left(a\right)$ with $v^\star(a) = 1 + \log(a)$ for all $a\in \Re_+$.
  The third equality follows from \citet[Theorem 14.60]{rockafellar2009variational}.
  The fourth equality follows from the fact that $\Pi_{\Xi'}T(P) = O\star P$. The fifth equality follows from the convex conjugate identity $\max_{R\geq 0} a R - R \log\left(R\right) = a R^\star(a) - R^\star(a) \log(R^\star(a))= \exp(a-1)$  with $R^\star(a)=\exp\left(a-1\right)$ for all $a\in\Re$.
  The sixth equality follows again from \citet[Theorem 14.60]{rockafellar2009variational}.
  The first inequality follows from the fact that with any $T'\ll T(P) \in \mc M_+(\Xi\times \Xi')$ we can associate the Radon-Nikodym derivative $R(\xi, \xi') = \tfrac{\d T'}{\d T(P)}(\xi, \xi')$. The second inequality follows from the standard minimax inequality.
  The penultimate equality follows from the fact that 
  \(
  \max_{v:\Xi'\to\Re}  \int_{\Xi'} v(\xi') \d \hat P'(\xi') - \int_{\Xi\times \Xi'} v(\xi') \d T'(\xi, \xi') = \chi_\infty(\Pi_{\Xi'}T'=\hat P').
  \)
  The ultimate equality follows from the fact that $T' \ll T(P)\in \mc M_+(\Xi\times \Xi'), ~\Pi_{\Xi'}T'=\hat P', ~\int \d \hat P'(\xi')=1\implies T' \ll T(P)\in \mc P(\Xi\times \Xi')$.

  Recall the definition of $T'^\star$ stated in Equation \eqref{eq:explicit-Tstar}.
  We have the chain of equalities
  \begin{align*}
   \int_{\Xi\times \Xi'} \one{\xi'\in B} \d T'^\star(\xi, \xi') = & \int_{\Xi\times\Xi'} \one{\xi'\in B} \tfrac{\d \hat P'}{\d (O\star P)}(\xi') \, \d T(P)(\xi, \xi')\\
    =&  \int_{\Xi'} \one{\xi'\in B} \tfrac{\d \hat P'}{\d (O\star P)}(\xi') \d (O\star P)(\xi') = \hat P'(B)
  \end{align*}
  for any measurable set $B\subseteq \Xi'$. The first equality follows from Equation \eqref{eq:explicit-Tstar} and the second equality exploits $\Pi_{\Xi'} T(P) = O\star P$. In particular this implies that $T^\star\in \mc P(\Xi\times \Xi')$ when choosing $B=\Xi'$ and $\Pi_{\Xi'}T^\star=\hat P$.
  As we have here $\hat P'\ll O\star P$ we can define the functions
  \(
    v^\star(\xi') = \log (\tfrac{\d \hat P'}{\d (O\star P)}(\xi'))+1
  \)
  and
  \(
    R^\star(\xi, \xi')\defn \tfrac{\d T'^\star}{\d T(P)}(\xi, \xi') = \tfrac{\d \hat P'}{\d (O\star P)}(\xi') = \exp\left(v^\star(\xi') -1\right) 
  \)
  for all  $\xi\in \Xi$  and $\xi'\in \Xi'$.
  Observe now that
  \begin{align*}
    & \KL(T'^\star, T(P))\\
      = &  \int_{\Xi\times \Xi'} \log\left(\frac{\d T'^\star}{\d T(P)}(\xi, \xi')\right)  \d T'^\star(\xi, \xi')\\
    = & \int_{\Xi'} v^\star(\xi') \d \hat P'(\xi') - \int_{\Xi\times \Xi'} v^\star(\xi') + \log\left(\frac{\d T'^\star}{\d T(P)}(\xi, \xi')\right) \, \d T'^\star(\xi, \xi')\\
    = & \int_{\Xi'} v^\star(\xi') \d \hat P'(\xi') - \int_{\Xi\times \Xi'} v^\star(\xi') \frac{\d T'^\star}{\d T(P)}(\xi, \xi')  + \log\left(\frac{\d T'^\star}{\d T(P)}(\xi, \xi')\right)  \frac{\d T'^\star}{\d T(P)}(\xi, \xi') \, \d T(P)(\xi, \xi')\\
    = & \int_{\Xi'} v^\star(\xi') \d \hat P'(\xi') - \int_{\Xi\times \Xi'} R^\star(\xi, \xi') v^\star(\xi') - R^\star(\xi, \xi') \log(R^\star(\xi, \xi'))\, \d T(P)(\xi, \xi')\\
    = & \int_{\Xi'} v^\star(\xi') \d \hat P'(\xi') - \int_{\Xi\times \Xi'}\exp(v^\star(\xi')-1)\, \d T(P)(\xi, \xi')\\
    = &\int_{\Xi'} v(\xi') \d \hat P'(\xi') - \int_{\Xi'}\exp(v(\xi')-1)\, \d (O\star P)(\xi')\\
    = & \int_{\Xi'} v(\xi') \d \hat P'(\xi') + \int_{\Xi'} \d \hat P(\xi') - \int_{\Xi'}\d (O\star P)(\xi') = \KL(\hat P', O\star P)
  \end{align*}
  The first equality follows from the definition of the Kullback-Leibler divergence and the fact that both $T'^\star$ and $T(P)$ are in $\mc P(\Xi\times \Xi')$.  The second equality follows from the fact that $\int_{\Xi'} v^\star(\xi') \d \hat P'(\xi') - \int_{\Xi\times \Xi'} v^\star(\xi') \d T'^\star(\xi, \xi') = 0$ as $\Pi_{\Xi'}T'^\star=\hat P'$ and the linearity of integration. The third, fourth and fifth equality follow from the fact that $T^\star\ll T(P)$ with Radon-Nikodym derivative $R^\star(\xi, \xi')\defn \tfrac{\d T^\star}{\d T(P)}=\exp(v^\star(\xi')-1)$.
  The penultimate equality follows from $\Pi_{\Xi'}T(P) = O\star P$.
  The final equality is direct consequence of the identity $v^\star(\xi') = \log (\tfrac{\d \hat P'}{\d (O\star P)}(\xi'))+1$.
  As we have already shown that $\KL(\hat P', O\star P)\leq \min \tset{\KL(T', T(P))}{T'\in \mc P(\Xi\times\Xi'), ~\Pi_{\Xi'}T'=\hat P'}$ in Equality \eqref{ineq:KL-divergence} this implies that as $T'^{\star}\in \mc P(\Xi\times\Xi'), ~\Pi_{\Xi'}T'^\star=\hat P'$ we have $\KL(\hat P', O\star P)= \min \tset{\KL(T', T(P))}{T'\in \mc P(\Xi\times\Xi'), ~\Pi_{\Xi'}T'=\hat P'}$.

  We remark that in the case where $\hat P' \ll O\star P$ we have
  \begin{align*}
    \Pi_\Xi T'^\star (B) = &\int_{\Xi\times \Xi'} \one{\xi\in B} \d T'^\star(\xi, \xi')\\
    = &  \int_{\Xi\times \Xi'}  \one{\xi\in B} \frac{\d \hat P'}{\d (O\star P)}(\xi') \d T(P)(\xi, \xi')\\
    = & \int_{\Xi\times \Xi'}  \one{\xi\in B} \left(\frac{\d \hat P'}{\d (O\star P)}(\xi') \exp(-d(\xi,\xi'))  \d m'(\xi')\right) \d P(\xi)\\
    = & \int_{\Xi}  \one{\xi\in B} \left(\int_{\Xi'}\frac{\exp(-d(\xi,\xi'))}{\int_{\Xi} \exp(-d(\xi'', \xi'))\d P(\xi'')} \d \hat P'(\xi') \right) \d P(\xi)
  \end{align*}
  for all $B\subseteq \Xi$. Hence,
  \(
    \tfrac{\Pi_\Xi T'^\star}{\d P}(\xi) = \int_{\Xi'}\tfrac{\exp(-d(\xi,\xi'))}{(\int_{\Xi} \exp(-d(\xi'', \xi'))\d P(\xi''))} \d\hat P'(\xi') 
    \)
   for all $\xi\in \Xi$.
\end{proof}

\begin{lemma}
  \label{sec:proof-theor-refthm:l}
  We have under Assumption \ref{ass:density} that
  \begin{align*}
     \inf & \set{\KL(T', T(P))}{T'\in \mc P(\Xi\times\Xi'),~\Pi_{\Xi'}T'=\hat P'}
    = \inf \set{W_1(\hat P', Q) + \KL(Q, P)+ \KL(\hat P', m')}{{Q\in \mc P(\Xi)}}.
  \end{align*}
\end{lemma}
\begin{proof}
  Under Assumption \ref{ass:density} we have $T(P)\ll P\otimes m'$ and $P\otimes m'\ll T(P)$ with
  \(
  \tfrac{\d T(P)}{\d (P\otimes m')}(\xi, \xi') = \exp(-d(\xi,\xi'))
  \)
  and
  \(
  \tfrac{\d (P\otimes m')}{\d T(P)}(\xi, \xi') = \exp(d(\xi,\xi')).
  \)
  Assume that $T'\ll T(P)$ as otherwise $\KL(T', T(P))=+\infty$. The assumption $T'\ll T(P)$ furthermore implies $\Pi_\Xi T'\ll \Pi_\Xi T(P)=P$ and $\Pi_{\Xi'}T'\ll \Pi_{\Xi'} T(P) = P' \ll  m'$. We have here hence
  \begin{align}
    & {\KL}(T', T(P)) \nonumber\\[0.5em]
    = & \int_{\Xi\times \Xi'} \log\left(\frac{\d T'}{\d T(P)}(\xi, \xi')\right) \d T'(\xi, \xi') \nonumber\\[0.5em]
    = & \int_{\Xi\times \Xi'} \log \left(\frac{\d T'}{\d P\otimes m'}(\xi, \xi') ~ \frac{\d P\otimes m'}{\d T(P)}(\xi, \xi')  \right) \,\d T'(\xi, \xi') \nonumber\\[0.5em]
    = & \int_{\Xi\times \Xi'} \log \left(\frac{\d T'}{\d P\otimes m'}(\xi, \xi')\right) \,\d T'(\xi, \xi')  - \int_{\Xi\times \Xi'} \log \left(\frac{\d T(P)}{\d P\otimes m'}(\xi, \xi')\right) \,\d T'(\xi, \xi')\nonumber\\[0.5em]
    = & {\KL}(T', P\otimes m') - \int_{\Xi\times \Xi'} \log \left(\exp(-d(\xi, \xi'))\right) \,\d T'(\xi, \xi')\nonumber\\[0.5em]
  \label{eq:KL-interpretation}  = & \int_{\Xi\times \Xi'} d(\xi, \xi')\,\d T'(\xi, \xi') + {\KL}(T', \Pi_\Xi T'\otimes \Pi_{\Xi'}T')+{\KL}(\Pi_\Xi T', P) + \KL(\Pi_{\Xi'}T', m')\geq 0.
  \end{align}
  The first equality follows from the definition of the Kullback-Leibler divergence and the fact that $T'$ and $T(P)$ are in $\mc P(\Xi\times \Xi')$. The second equality follows from the fact that $T'\ll P\otimes m'$ and $P\otimes m' \ll T(P)$. The third and fourth equalities follow from 
  $T(P)\ll P\otimes m'$ and $P\otimes m'\ll T(P)$ with
  \(
  \tfrac{\d T(P)}{\d (P\otimes m')}(\xi, \xi') = \exp(-d(\xi,\xi')).
  \)
  The final equality follows from \cite[Lemma 6.1]{rigollet2018entropic}. Hence, we have
  \begingroup
  \allowdisplaybreaks
  \begin{align*}
    & \inf \set{\KL(T', T(P))}{T'\in \mc P(\Xi\times\Xi'),~ \Pi_{\Xi'} T'=\hat P'}\\[0.5em]
    =& \inf \set{\KL(T', T(P))}{Q\in \mc P(\Xi),~T'\ll T(P)\in\mc P(\Xi\times\Xi') ,~ \Pi_{\Xi} T'=Q, ~\Pi_{\Xi'}T'=\hat P'}\\[0.5em]
    =& \inf \, \big\{\textstyle\int d(\xi, \xi')\,\d T'(\xi, \xi') + \KL(T', \Pi_\Xi T'\otimes \Pi_{\Xi'}T')+\KL(Q, P) + \KL(\hat P', m') \\
    & \hspace{20em}: Q\in \mc P(\Xi), ~T'\ll T(P)\in\mc P(\Xi\times\Xi'), ~T'\in \mc T(\hat P', Q)\big\}\\[0.5em]
    =& \inf \, \big\{\textstyle\int d(\xi, \xi')\,\d T'(\xi, \xi') + \KL(T', \Pi_\Xi T'\otimes \Pi_{\Xi'}T')+{\KL}(Q, P) + \KL(\hat P', m') \\
    & \hspace{20em}: Q\in \mc P(\Xi), ~T'\in\mc P(\Xi\times\Xi'), ~T'\in \mc T(\hat P', Q)\big\}\\[0.5em]
    = & \inf_{Q\in \mc P(\Xi)} W_1(\hat P', Q) + \KL(Q, P)+ \KL(\hat P', m')
  \end{align*}
  \endgroup  
  establishing the claim. The first equality follows here from the fact that $T'\not \ll T(P) \implies \KL(T', T(P))=\infty$. The second equality follows from Equation \eqref{eq:KL-interpretation}.
  Recall that we may assume that $Q\ll P$ and $P'\ll m'$ as otherwise $\KL(Q, P)=\infty$ or $\KL(\hat P', m')=\infty$, respectively.
  Hence, we have $T' \ll \Pi_\Xi T'\otimes \Pi_{\Xi'}T' = Q \otimes \hat P' \ll P\otimes m' \ll T(P)$ from which the third equality follows immediately.
  The final equality follows from the definition of entropic optimal transport in Equation \eqref{def:entropic-wasserstein}.
\end{proof}

{\color{black}
\subsection{Proof of Proposition \ref{prop:confidence}}

Let $C(\delta, A)$ denote the minimal covering number of a set $A$ in a metric space $S$ using metric norm balls of radius $\delta$.
\citet[Lemma 1]{kulkarni1995general} derive upper and lower bounds on covering numbers $C(\delta, \mc P(\Xi'))$ assuming the event set $\Xi'$ is bounded.
We derive here first a similar bound where bounded variance substitutes for bounded support. 

\begin{lemma}
  \label{lemma:cover-bound}
  Let $\mc P_V(\Xi')$ be the simplex of probability distributions $\mu$ on $\Xi'$ with $\int \norm{\xi'}^2 \d \mu(\xi')< V$. Then,
  \[
    C(\delta, \mc P_V(\Xi'))\leq 2\left(\tfrac{2e}{\delta}\right)^{C(\delta, \Xi'_\delta)}
  \]
  where $\Xi'_\delta \defn \Xi' \cap \{\norm{\xi'}^2\leq \tfrac{2V}{\delta}\}$.
\end{lemma}
\begin{proof}

  We set out along the same lines as the proof of \cite[Lemma 1]{kulkarni1995general} and attempt to prove the lemma by explicitly constructing an $\epsilon$-cover of $\mc P_V(\Xi')$.

  Let $\tilde \xi'_1, \dots, \tilde \xi'_{C(\delta, \Xi'_\delta)}$ be the centers of a set of $\delta$-norm balls which cover $\Xi'_\delta$. We shall consider only distributions in our cover which are supported on this finite collection of points in $\Xi'$.
  Define indeed the measures
  \[
    \nu_i^j =\frac{j \delta}{2C( \delta, \Xi'_\delta)}\cdot \delta_{\xi'_i} \quad \forall i=1, \dots, C( \delta, \Xi'_\delta), ~\forall j=0, 1, \dots, \frac{2C( \delta, \Xi'_\delta)}{\delta}
  \]
  and the set
  \[
    \mc S = \set{\nu\in\mc P(\Xi')}{\exists (i_1, j_1), \dots, (i_k, j_k) ~\st~ \mu = \textstyle\sum_{\alpha=1}^k \nu_{i_\alpha}^{j_\alpha}}.
  \]
  Note that the set $\mc S$ is finite as indeed it counts at most $(\tfrac{2C( \delta, \Xi'_\delta)}{\delta}+1)^{C( \delta, \Xi'_\delta)}$ distributions.
  Also, note that $\mc S$ is an $\delta$-cover of $\mc P_V(\Xi')$, that is, for any $\mu \in \mc P_V(\Xi')$ there exists an $\nu \in \mc S$ such that $\LP{\mu}{\nu}\leq \delta$. To see that, choose $\nu$ the following approximation to $\mu$. Let $i_\alpha=\alpha$, $\alpha=1,\dots, 2C( \delta, \Xi'_\delta)-1$ and choose
  \[
    j_\alpha = \lfloor \mu \left[ \left(\Xi'_\delta\cap B(\xi'_{i_\alpha}, \delta)\right)\setminus \left(\cup_{k=1}^{\alpha-1}B(\xi'_{i_k}, \delta)\right) \right] \tfrac{2C( \delta, \Xi'_\delta)}{\delta} \rfloor
  \]
  and finally
  \[
    j_{C( \delta, \Xi'_\delta)} = \frac{2C( \delta, \Xi'_\delta)}{\delta}-\sum_{\alpha=1}^{C( \delta, \Xi'_\delta)-1} j_\alpha.
  \]
  Here we take $B(\xi', \delta)$ to mean the closed norm ball in $\Xi'$ centered at $\xi'$ of radius $\delta$.
  Now take $\nu = \sum_{\alpha=1}^{C( \delta, \Xi'_\delta)} \nu_{i_\alpha}^{j_\alpha}\in \mc S$. By construction of the final $j_{C( \delta, \Xi'_\delta)}$ we have that $\nu$ must be a probability distribution. 
  Furthermore, for any measurable set  $E\subseteq \Xi'$ we have
  \begingroup
  \allowdisplaybreaks
  \begin{align*}
    \mu[E]= & \mu[E\setminus \Xi'_\delta]+\mu [E\cap \Xi'_\delta]\\
    \leq & \delta/2+ \mu [E\cap \Xi'_\delta]\\
    \leq & \delta/2+ \sum_{\alpha=1}^{C( \delta, \Xi'_\delta)} \mu\left[ (E\cap \Xi'_\delta) \cap \left( B(\xi'_{i_\alpha}, \delta) \setminus \left(\cup_{k=1}^{\alpha-1}B(\xi'_{i_k}, \delta)\right) \right) \right]\\
    \leq & \delta/2+ \sum_{\alpha=1}^{C( \delta, \Xi'_\delta)} \mu\left[ B(\xi'_{i_\alpha}, \delta) \setminus \left(\cup_{k=1}^{\alpha-1}B(\xi'_{i_k}, \delta)\right)  \right] \one{\xi'_{i_\alpha}\in E^\delta}\\
    \leq & \delta/2+\sum_{\alpha=1}^{C( \delta, \Xi'_\delta)} \left( j_\alpha \frac{\delta}{2C( \delta, \Xi'_\delta)} + \frac{\delta}{2C( \delta, \Xi'_\delta)} \right) \one{\xi'_{i_\alpha}\in E^\delta}\\
    \leq & \delta/2+\sum_{\alpha=1}^{C( \delta, \Xi'_\delta)} \left( j_\alpha \frac{\delta}{2C( \delta, \Xi'_\delta)}  \right) \one{\xi'_{i_\alpha}\in E^\delta} +\delta/2\\
    =& \nu[E^\delta]+\delta.
  \end{align*}
  \endgroup
  The first inequality follows from Chebyshev inequality $\mu(\Xi'\setminus \Xi'_\delta) \leq \mu(\norm{\xi'}>\sqrt{\tfrac{2V}{\delta}})\leq \delta/2$ as $\mu\in \mc P_V(\Xi')$.
  The second inequality follows from the fact that $\cup_{\alpha=1}^{C(\delta, \Xi'_\delta )}B(\xi_\alpha', \delta) \supseteq \Xi'_\delta$.
  The third inequality follows from the observation that if $\xi'_{i_\alpha}\not\in E^\delta$ then clearly $B(\xi'_{i_\alpha}, \delta)\cap E=\emptyset$.
  The fourth inequality is justified by
  \[
    j_\alpha+1 \geq \mu \left[ \left(\Xi'_\delta\cap B(\xi'_{i_\alpha}, \delta)\right)\setminus \left(\cup_{k=1}^{\alpha-1}B(\xi'_{i_k}, \delta)\right) \right] \tfrac{2C( \delta, \Xi'_\delta)}{\delta}.
  \]
  The fifth inequality then follows from $\sum_{\alpha=1}^{C( \delta, \Xi'_\delta)}\tfrac{\delta}{(2C( \delta, \Xi'_\delta))}\one{\xi'_{i_\alpha}\in E^\delta}\leq C( \delta, \Xi'_\delta) \tfrac{\delta}{(2C( \delta, \Xi'_\delta))} \leq \delta/2$. The final inequality follows from the observation that $\nu[E^\delta]=\sum_{\alpha=1}^{C( \delta, \Xi'_\delta)} \left( j_\alpha \tfrac{\delta}{(2C( \delta, \Xi'_\delta))}  \right) \one{\xi'_{i_\alpha}\in E^\delta}$.
  As $\mu[E]\leq \nu[E^\delta]+\delta$ holds for any measurable set $E$ we have by definition of the L\'evy-Prokhorov metric that $\LP{\nu}{\mu}\leq \delta$.

  Now straightforwardly adapting the cardinality estimate provided in the proof of \cite[Lemma 1]{kulkarni1995general} we get $\abs{\mc S} \leq 2 \left(\tfrac{2e}{\delta}\right)^{C(\delta, \Xi'_\delta)}$ and the result follows.
\end{proof}

\begin{lemma}
  \label{lemma:wlln-2}
  Let $\int \norm{\xi'}^2 \d P'(\xi')\leq V'$ and $\int \norm{\xi'}^4 \d P'(\xi')\leq K'^2$. Then, we have that
  \[
    \lim_{N\to\infty} \Prob[P_N'\not\in \mc P_{V'+\Delta}(\Xi')] \leq \tfrac{K'^2}{(\Delta^2 N)}.
  \]
  for any $\Delta>0$.
\end{lemma}
\begin{proof}

  Denote with $F'$ the distribution of the random variable $R=\norm{\xi'}^2$. We have 
  $\E{F'}{R} \leq V'$ and observe $\E{F'}{(R-\E{F'}{R})^2}\leq \E{F'}{R^2}=K;^2\defn\E{P'}{\norm{\xi'}^4}$. Let $F'_N = \frac 1N \sum_{i=1}^N \delta_{\norm{\xi'_i}^2}$. From Chebyshev's inequality it follows that
  \begin{align*}
    \lim_{N\to\infty}\Prob[\textstyle\int \norm{\xi'}^2 \d P'_N(\xi')> V'+\Delta]  \leq & \lim_{N\to\infty} \Prob[\textstyle\int r \d F'_N(r) > 2V']\\   
    \leq & \lim_{N\to\infty} \Prob[\textstyle\int r \d F'_N(r) - \int r \d F'(r)  > \Delta] \\
    \leq & \lim_{N\to\infty} \Prob[\abs{\textstyle\int r \d F'_N(r) - \int r \d F'(r)}  > \frac{K'}{\sqrt{N}} \frac{\Delta\sqrt{N}}{K'}] \\
    \leq & \,\tfrac{K'^2}{(\Delta^2 N)}
  \end{align*}
  establishing the claim.
\end{proof}

\begin{proof}[Proof of Proposition \ref{prop:confidence}]
  Let $V'>0$ be an upper bound on the variance  and $K'^2$ an upper bound on the fourth order moment of $P'$, i.e., $\int \norm{\xi'}^2 \d P'(\xi') < V'$ and $\int \norm{\xi'}^2 \d P'(\xi') \leq \int \norm{\xi'}^4 \d P'(\xi')\leq K'^2<\infty$. Let $\mc P_{V'+\Delta}(\Xi')$ be the set of distributions on $\Xi'$ with variance bounded by $V'+\Delta$ for some arbitrary $\Delta>0$. Define a set of bad events $E_N=\tset{\hat P'\in \mc P(\Xi')}{\exists! Q ~\st~ \LP{\hat P'}{Q}\leq \delta_N,~\KL(Q, P')\leq r_N}$. We have
  \begin{align}
    \Prob\left[\exists Q ~\st~ \LP{P_N'}{Q}\leq \delta_N,~\KL(Q, P')\leq r_N\right]  =& 1-\Prob\left[P'_N\in E_N\right], \nonumber\\
    \Prob\left[P'_N\in E_N\right] \leq & \Prob\left[P'_N\not\in \mc P_{V'+\Delta}(\Xi') \right] + \Prob\left[P'_N\in E_N\cap \mc P_{V'+\Delta}(\Xi')\right]. \label{eq:overall-bound}
  \end{align}
  In Lemma \ref{lemma:wlln-2} we show that
  \begin{equation}
    \label{eq:non-exponential-finite-sample-part}
    \Prob\left[P'_N\not\in \mc P_{V'+\Delta}(\Xi') \right]\leq \tfrac{K'^2}{(\Delta^2 N)}
  \end{equation}
  as a direct consequence of Chebyshev's inequality and hence also $\lim_{N\to\infty}\Prob\left[P'_N\not\in \mc P_{V'+\Delta}(\Xi') \right]=0$.
  It remains to show that also $\lim_{N\to\infty} \Prob\left[P'_N\in E_N\cap \mc P_{V'+\Delta}(\Xi')\right]=0$. According to \cite[Equation 4.5.7 \& Lemma 6.2.13]{dembo2009large} we have the finite sample bound
  \begin{align*}
    \Prob\left[P'_N\in E_N\cap \mc P_{V'+\Delta}(\Xi')\right] & \leq  C(\delta_N, E_N\cap \mc P_{V'+\Delta}(\Xi')) \exp(-N \inf_{\LP{Q}{Q'}\leq \delta_N, Q\in E_N} \KL(Q', P'))\\
                                                          & \leq  C(\delta_N, \mc P_{V'+\Delta}(\Xi')) \exp(-N r_N).
  \end{align*}
  The second equality follows from $Q\in E_N \implies KL(Q', P')>r_N$ for any $\LP{Q}{Q'}\leq \delta_N$. Furthermore, in Lemma \ref{lemma:cover-bound} we show that
  \(
  C(\delta_N, \mc P_{V'+\Delta}(\Xi')) \leq 2\left(\tfrac{2e}{\delta_N}\right)^{C(\delta_N, \Xi'_{\delta_N})}
  \)
  where $\Xi'_{\delta_N} \defn \Xi' \cap \{\norm{\xi'}^2\leq \tfrac{2(V'+\Delta)}{\delta_N}\}$. Finally, we have the bound $C(\delta_N, \Xi'_{\delta_N}) \leq (4 (V'+\Delta) \sqrt{\dim(\Xi')}/\delta_N^2)^{\dim(\Xi')}$ by \citet{shalev2014understanding}. Hence,
  \begin{align}
    \log \Prob\left[P'_N\in E_N\cap \mc P_{V'+\Delta}(\Xi')\right]&\leq \log C(\delta_N, \mc P_{V'+\Delta}(\Xi')) - N r_N\nonumber\\
    & \leq \log 2 + C(\delta_N, \Xi'_{\delta_N}) \log\left(\tfrac{2e}{\delta_N}\right) - N r_N\nonumber\\
    & \leq \log 2 + (4 (V'+\Delta) \sqrt{{\dim(\Xi')}}/\delta_N^2)^{\dim(\Xi')} \log\left(\tfrac{2e}{\delta_N}\right) - N r_N\nonumber\\
    &\leq \log 2 + (4 (V'+\Delta) \sqrt{{\dim(\Xi')}})^{\dim(\Xi')}\log\left(\tfrac{2e}{\delta_N}\right)\delta^{-2{\dim(\Xi')}}_N - N r_N. \label{eq:exponential-finite-sample-part}
  \end{align}
    As for $r_N = \Omega(N^{\gamma-1})$ and $\delta_N = \Omega(N^{-\tfrac{\gamma'}{(2{\dim(\Xi')})}})$ the left hand side of the final inequality is unbounded from below as $N$ tends to infinity we have shown that $\lim_{N\to\infty} \Prob\left[P'_N\in E_N\cap \mc P_{V'+\Delta}(\Xi')\right]=0$ from which the claim follows.
    We remark that by simply combining \eqref{eq:overall-bound}, \eqref{eq:non-exponential-finite-sample-part} and \eqref{eq:exponential-finite-sample-part} we can also derive the finite sample guarantee
  \begin{align}
    & \Prob\left[\exists Q ~\st~ \LP{P_N'}{Q}\leq \delta_N,~\KL(Q, P')\leq r_N\right]\nonumber\\
    \geq & 1-[\textstyle\inf_{\Delta>0} \tfrac{K'^2}{(\Delta N)} + 2 (2e/\delta_N)^{({4(V'+\Delta)\sqrt{\dim(\Xi')}\delta_N^{-2})}^{\dim(\Xi')}}\exp(-r_N N)]     \label{eq:finite-sample-bound}
  \end{align}
  as here the parameter $\Delta>0$ was arbitrary.
\end{proof}

\subsection{Proof of Theorem \ref{thm:consistency}}

As remarked in Section \ref{sec:consistency} consistency is only possible when it holds that $P$ can be identified from $P'$, i.e., $\set{Q\in \mc P}{O\star Q=O\star P}=\{P\}$. In the additive noise setting introduced in Example \ref{ex:additive-error} the event sets are $\Xi$ and $\Xi'=\Re^{\dim(\Xi)}$.
The next result establishes a slightly stronger identifiability result than the one discussed using L\'evy's continuity theorem.

\begin{lemma}
  \label{lemma:weak-convergence}
  Let $O^{AE}$ be the additive error observational model discussed in Example \ref{ex:gaussian-noise} with a noise distribution $E$ satisfying Assumption \ref{ass:identifiability}.
  Consider any sequence $\delta'_N\geq 0$ with $\lim_{N\to\infty}\delta'_N=0$. Then, there exists a nonincreasing sequence $\delta''_N\geq 0$ with $\lim_{N\to\infty}\delta''_N=0$ so that
  \[
    \set{Q\in \mc P}{\LP{O^{AE}\star Q}{O^{AE}\star P}\leq \delta'_N}\subseteq \set{Q\in \mc P}{\LP{Q}{P}\leq \delta''_N}.
  \]
\end{lemma}
\begin{proof}
  For the sake of contradiction, assume the statement is false and hence there must exist a sequence $\{Q_N\}_{N\geq 1}$ so that $\LP{O^{AE}\star Q_N}{O^{AE}\star P}\leq \delta_N'$ and $\lim_{N\to\infty}\LP{Q_N}{P}>0$. That is, we can find a sequence $\{Q_N\}_{N\geq 1}$ so that $O^{AE}\star Q_N$ converges weakly to $O^{AE}\star P$ but $Q_N$ does not converge weakly to $P$. As shown below this leads to a contradiction.

  Recall that the characteristic function of the convolution of any two distributions is the product of their characteristic functions.
  As we have that $O^{AE}\star Q_N$ converges weakly to $O^{AE}\star P$ it follows from L\'evy's continuity theorem that the characteristic functions
  $\varphi_{O^{AE}\star Q_N}=\varphi_{E}\varphi_{Q_N}$ must converge pointwise to the characteristic function $\varphi_{O^{AE}\star P} = \varphi_{E}\varphi_{P}$. As we have here that the characteristic function $\varphi_{E}$ has no roots it follows that $\varphi_{Q_N}$ must converge pointwise to $\varphi_{P}$. From L\'evy's continuity theorem it follows now that $Q_N$ converges weakly to $P$. This contradicts the fact that $\lim_{N\to\infty}\LP{Q_N}{P}>0$ as the L\'evy-Prokhorov metric metrizes the topology of weak convergence.
\end{proof}

\begin{lemma}
  \label{lemma:LP}
  We have
  \[
    \sup \set{\E{Q}{\ell(z, \xi)}\!}{\!Q\in \mc P, ~\LP{P}{Q} \leq \delta'_N}\!\leq\! \E{P}{\ell(z, \xi)} +2 \delta_N\left( \max_{z\in Z, \xi\in \Xi}\abs{\ell(z, \xi)} + \sup_{z\in Z, \xi_1\neq\xi_2\in \Xi}\frac{\abs{\ell(z, \xi_1)-\ell(z, \xi_2)}}{\norm{\xi_1-\xi_2}} \right).
  \]
\end{lemma}
\begin{proof}

  For a function $f:\Xi\to\Re$ let us define its norm as $\norm{f}_{BL}=\norm{f}_\infty+\norm{f}_L$ where $\norm{f}_\infty=\sup_{\xi\in \Xi}\abs{f(\xi)}$ and $\norm{f}_L=\sup_{\xi_1\neq\xi_2\in \Xi}\tfrac{\abs{f(\xi_1)-f(\xi_2)}}{\norm{\xi_1-\xi_2}}$.
  We associate with this norm the integral probability metric $BL(P, Q) = \sup \set{\int f(\xi)\d Q(\xi)-\int f(\xi)\d P(\xi)  }{\norm{f}_{BL}\leq 1}$.
  \citet[Corollary 2]{dudley1968distances} shows that $BL(P, Q)\leq 2 \LP{P}{Q}$.
  The remainder of the result follows from
  \begin{align*}
    & ~\sup \set{\E{Q}{\ell(z, \xi)}}{Q\in \mc P, ~2\LP{P}{Q} \leq 2\delta'_N} \\
    \leq & ~\sup \set{\E{Q}{\ell(z, \xi)}}{Q\in \mc P, ~BL(P,Q) \leq 2\delta'_N} \\
    = & ~\E{P}{\ell(z, \xi)} + \sup\set{\E{Q}{\ell(z, \xi)}-\E{P}{\ell(z, \xi)}}{Q\in \mc P, ~BL(P,Q) \leq 2\delta'_N} \\
    \leq & ~\E{Q}{\ell(z, \xi)} + \sup\set{\abs{\E{Q}{\ell(z, \xi)}-\E{P}{\ell(z, \xi)}}}{Q\in \mc P, ~BL(P,Q) \leq 2\delta'_N}\\
    = & ~\E{Q}{\ell(z, \xi)} + \norm{\ell(z, \cdot)}_{BL} \sup\set{\abs{\E{Q}{\ell(z, \xi)/\norm{\ell(z, \cdot)}_{BL}}-\E{P}{\ell(z, \xi)/\norm{\ell(z, \cdot)}_{BL}}}}{Q\in \mc P, ~BL(P,Q) \leq 2\delta'_N}\\
    \leq & ~\E{Q}{\ell(z, \xi)} + \norm{\ell(z, \cdot)}_{BL} 2\delta'_N.
  \end{align*}
  The ultimate equality form the definition of $BL(P, Q)$ with the observation that $\norm{\ell(z, \xi)/\norm{\ell(z, \cdot)}_{BL}}_{BL}\leq 1$ for all $z\in Z$.
\end{proof}

\begin{proof}[Proof of Theorem \ref{thm:consistency}]
  Assume that the event $\exists Q' ~\st~ \LP{P_N'}{Q'}\leq \delta_N,~\KL(Q', P')\leq r_N$ occurs.
  We have that $\LP{Q'}{P'}\leq TV(Q', P')\leq \sqrt{\KL(Q', P')/2}\leq \sqrt{r_N/2}$ due to \cite[Equation 4.13]{huber1981robust} and Pinkser's inequality where $TV(Q', P')$ denotes the total variation distance between $Q'$ and $P'$.
  From the triangle inequality it follows that we have here $\LP{P'_N}{O^{AE}\star P}\leq \delta_N+\sqrt{r_N/2}$. Hence, we have that
  \begin{align*}
    & \set{Q\in \mc P}{\exists Q'\in \mc P(\Xi')~\st~\LP{P_N'}{Q'}\leq \delta_N,~\KL(Q', O^{AE}\star Q)\leq r_N} \\
    \subseteq &\, \tset{Q\in \mc P}{\LP{O^{AE}\star Q}{O^{AE}\star P}\leq 2(\delta_N+\sqrt{\tfrac{r_N}{2}})}.
  \end{align*}
  Indeed, we have using the same argument that $\LP{P_N'}{Q'}\leq \delta_N,~\KL(Q', O^{AE}\star Q)\leq r_N$ implies  $\LP{P'_N}{O^{AE}\star Q}\leq \delta_N+\sqrt{r_N/2}$. Using the triangular inequality it follows that $\LP{O^{AE}\star Q}{P'_N}=\LP{P'_N}{O^{AE}\star Q}\leq \delta_N+\sqrt{r_N/2}$  and  $\LP{P'_N}{O^{AE}\star P}\leq \delta_N+\sqrt{r_N/2}$ implies $\LP{O^{AE}\star Q}{O^{AE}\star P}\leq 2(\delta_N+\sqrt{\tfrac{r_N}{2}})$.
  Hence, we have for all $z\in Z$ that
  \begin{align*}
    c(z, P) \leq \tilde c_N(z, P'_N) \defn & \sup \set{\E{Q}{\ell(z, \xi)}}{Q\in \mc P, ~I^{\delta_N}(P'_N, Q)\leq r_N}\\
    \leq & \sup \set{\E{Q}{\ell(z, \xi)}}{Q\in \mc P, ~\LP{O^{AE}\star Q}{O^{AE}\star P}\leq 2(\delta_N+\sqrt{\tfrac{r_N}{2}})}.
  \end{align*}
  Remark that the inequality $c(z, P) \leq \tilde c_N(z, P'_N)$ follows from the fact that we assume here that $\exists Q' ~\st~ \LP{P_N'}{Q'}\leq \delta_N,~\KL(Q', P')=I(Q, O^{AE}\star P)\leq r_N$ has occurred.
  As we have that $2(\delta_N+\sqrt{\tfrac{r_N}{2}})$ tends to zero, Lemma \ref{lemma:weak-convergence} guarantees that
  \begin{align*}
    \tilde c_N(z, P'_N) \leq & \, \sup \set{\E{Q}{\ell(z, \xi)}}{Q\in \mc P, ~\LP{P}{Q} \leq \delta''_N}\\
    \leq &\, \E{P}{\ell(z, \xi)} + 2\delta''_N(L+\mc L)
  \end{align*}
  for some deterministic decreasing sequence $\{\delta''_N\}_{N\geq 1}$ with $\lim_{N\to\infty}\delta''_N=0$. The second inequality is a direct consequence of Lemma \ref{lemma:LP}.
  Hence, finally we get
  \[
    \E{P}{\ell(\tilde z_N(P'_N), \xi)}=:c(\tilde z_N(P_N'), P)\leq \tilde c_N(\tilde z_N(P_N'), P'_N) \leq \inf_{z\in Z}\tilde c_N(z, P'_N) +\epsilon\leq \inf_{z\in Z} \E{P}{\ell(z, \xi)} + 2 \delta''_N (L+\mc L)+\epsilon.
  \]
  The claim follows by observing that for all $N$ sufficiently large $2 \delta''_N (L+\mc L)\leq \epsilon$ and the event $\exists Q' ~\st~ \LP{P_N'}{Q'}\leq \delta_N,~\KL(Q', P')\leq r_N$ occurs with probability tending to one following Proposition~\ref{prop:confidence}.
\end{proof}

}

\subsection{Proof of Lemma \ref{lemma:strassen-representation-finite}}
\label{sec:proof-lemma-ref}

\begin{proof}
  We first prove that $\tilde c^\delta(z, P'_N) \leq \tilde c_f^\delta(z, P'_N)$. Consider any feasible solution $(Q, \hat P', T)$ in Problem (\ref{eq:strassen-representation}). Consider
  \(
  q_\xi = Q(\xi)
  \)
  for all $\xi\in \Xi$, $\hat p'_k = \hat P'(\Xi'_k)$ and $t_{k,i}=T(\Xi'_k, \xi'_i)$ for all $1\leq i\leq \abs{\Xi'_N}$ and $0\leq k\leq K-1$.
  We have that
  \[
    \textstyle\sum_{k=0}^{K-1}\hat p'_k \log \left(\tfrac{\hat p'_k}{\sum_{\xi\in \Xi} O_\xi(\Xi'_k)\cdot q_\xi}\right) = \sum_{k=0}^{K-1}\hat P'(\Xi'_k) \log \left(\tfrac{\hat P'(\Xi'_k)}{\sum_{\xi\in \Xi} O_\xi(\Xi'_k)\cdot Q(\xi)}\right) \leq I(\hat P', Q)\leq r
  \]
  where the penultimate inequality follows from \cite{csiszar2006simple}. We have also
  \[
    \textstyle\sum_{i=1}^{\abs{\Xi'_N}} t_{k,i}= \sum_{i=1}^{\abs{\Xi'_N}} T(\Xi'_k, \xi'_i)= (\Pi_{\Xi'}T)(\Xi'_K) = \hat P'(\Xi'_k) = \hat p'_{k}
  \]
  for any $0\leq k\leq K-1$ as well as
  \[
    \textstyle\sum_{k=0}^{K-1} t_{k,i}= \sum_{k=0}^{K-1} T(\Xi'_k, \xi'_i) = (\Pi_{\Xi'_N}T)(\xi'_i)  =  P_N'(\xi'_i)
  \]
  for any $1\leq i\leq \abs{\Xi'_N}$. Finally, we remark that
  \[
    \sum_{i=1}^{\abs{\Xi'_N}} \sum_{k=0}^{K-1} b(k, i) t_{k,i} = \sum_{i=1}^{\abs{\Xi'_N}} \sum_{k=0}^{K-1} b(k, i) T(\Xi'_k, \xi'_i) = \sum_{i=1}^{\abs{\Xi'_N}} \int_{\Xi'} \one{\norm{\xi-\xi_i}\leq \delta} T(\xi', \xi'_i)  \geq 1-\delta.
  \]
  Hence, the constructed point $(q, \hat p', t)$ is feasible in Problem (\ref{eq:strassen-representation-finite}) and attains the same objective value as the point $(Q, \hat P', T)$ in Problem (\ref{eq:strassen-representation}).

  We now prove that $\tilde c^\delta(z, P'_N) \geq \tilde c_f^\delta(z, P'_N)$. Consider any feasible point $(q, \hat p', t)$ in Problem (\ref{eq:strassen-representation-finite}) and let $Q(\xi)=q_\xi$ for all $\xi\in \Xi$. Define first the measures $(O\star Q)_{\Xi'_k}\in \mc P(\Xi'_k)$ as the restriction of the measure $O\star Q$ to the set  $\Xi'_k\subseteq \Xi'$ for all $0 \leq k \leq K-1$. That is, these measures are defined through
  \(
  (O\star Q)_{\Xi'_k}(B) = \tfrac{(O\star Q)(B\cap\Xi'_k)}{(O\star Q)(\Xi'_k)}
  \)
  for all measurable subsets $B$ of $\Xi'$ and $0 \leq k \leq K-1$. Let now \( \hat P' = \sum_{k=0}^{K-1} \hat p'_k \cdot (O\star Q)_{\Xi'_k} \) and \( T = \sum_{i=1}^{\abs{\Xi'_N}} \sum_{k=0}^{K-1} t_{k,i} \cdot (\delta_{\xi'_i}\otimes (O\star Q)_{\Xi'_k})\).
  First, we remark that we have
  \begin{align*}
    (\Pi_{\Xi'}T)(B) =& \textstyle \sum_{j=1}^{\abs{\Xi'_N}} \sum_{i=1}^{\abs{\Xi'_N}} \sum_{k=0}^{K-1} t_{k,i} \cdot (\delta_{\xi'_i}(\xi'_j)\cdot (O\star Q)_{\Xi'_k}(B)) \\
    = & \textstyle \sum_{i=1}^{\abs{\Xi'_N}} \sum_{k=0}^{K-1} t_{k,i} \cdot (O\star Q)_{\Xi'_k}(B) \\
    = & \textstyle \sum_{k=0}^{K-1} \sum_{i=1}^{\abs{\Xi'_N}} t_{k, i} \cdot (O\star Q)_{\Xi'_k}(B) \\
    = & \textstyle \sum_{k=0}^{K-1} \hat p'_{k} \cdot (O\star Q)_{\Xi'_k}(B) = \hat P'(B)
  \end{align*}
  for any measurable subset $B\subseteq \Xi'_k$ for all $0 \leq k \leq K-1$. Hence, $\Pi_{\Xi'}T=\hat P'$. Likewise, we have that
  \begin{align*}
    (\Pi_{\Xi'_N}T)(\xi') =& \textstyle \sum_{l=0}^{K-1} \sum_{i=1}^{\abs{\Xi'_N}} \sum_{k=0}^{K-1} t_{k,i} \cdot (\delta_{\xi'_i}(\xi')\cdot (O\star Q)_{\Xi'_k}(\Xi'_l)) \\
    = & \textstyle \sum_{i=1}^{\abs{\Xi'_N}} \sum_{k=0}^{K-1} t_{k,i} \cdot \delta_{\xi'_i}(\xi') \\
    = & \textstyle \sum_{i=1}^{\abs{\Xi'_N}} P_N'(\xi'_i) \cdot \delta_{\xi'_i}(\xi') = P_N'(\xi')
  \end{align*}
  for all $\xi'\in \Xi'_N$. Hence, $\Pi_{\Xi'_N}T=P'_N$. Furthermore, as by construction the Radon-Nikodym derivative $(\d \hat P'/\d (O\star Q))(\xi) = \hat p'_k/(O\star Q)(\Xi'_k)$ for all $\xi\in \Xi_k$, $0 \leq k \leq K-1$ we have
  \[
    I(\hat P', Q) = \textstyle\sum_{k=0}^{K-1} \int_{\Xi'_k} \hat P'(\xi') \log\left(\frac{\d \hat P'}{\d (O\star Q)}(\xi')\right) = \sum_{k=0}^{K-1} \hat p'_k \log\left( \hat p'_k/(O\star Q)(\Xi'_k) \right) \leq r.
  \]
  Finally, we have
  \begingroup
  \allowdisplaybreaks
  \begin{align*}
   \textstyle\int_{\Xi'\times\Xi'_N}\one{\norm{\xi'-\xi'_i}\leq \delta}\d T(\xi', \xi_i') = & \textstyle\sum_{l=0}^{K-1}\int_{\Xi'_l\times \Xi'_N}\one{\norm{\xi'-\xi'_j}\leq \delta}\d T(\xi', \xi_j')\\
    = & \textstyle\sum_{l=0}^{K-1}\int_{\Xi'_l\times \Xi'_N}\one{\norm{\xi'-\xi'_j}\leq \delta}\d \sum_{i=1}^{\abs{\Xi'_N}} \sum_{k=0}^K t_{k,i} \cdot (\delta_{\xi'_i}\otimes (O\star Q)_{\Xi'_k})(\xi', \xi'_j)\\
    = & \textstyle\sum_{l=0}^{K-1}\sum_{i=1}^{\abs{\Xi'_N}} \sum_{k=0}^K t_{k,i} \int_{\Xi'_l\times \Xi'_N}\one{\norm{\xi'-\xi'_j}\leq \delta}\d(\delta_{\xi'_i}\otimes (O\star Q)_{\Xi'_k})(\xi', \xi'_j)\\
    = & \textstyle\sum_{i=1}^{\abs{\Xi'_N}} \sum_{k=0}^K t_{k,i} \int_{\Xi'_k\times \Xi'_N}\one{\norm{\xi'-\xi'_j}\leq \delta}\d(\delta_{\xi'_i}\otimes (O\star Q)_{\Xi'_k})(\xi', \xi'_j)\\
    = & \textstyle\sum_{i=1}^{\abs{\Xi'_N}} \sum_{k=0}^K t_{k,i} \int_{\Xi'_k}\one{\norm{\xi'-\xi'_i}\leq \delta}\d((O\star Q)_{\Xi'_k})(\xi')\\
    = & \textstyle\sum_{i=1}^{\abs{\Xi'_N}} \sum_{k=0}^K t_{k,i} b(k, i)\geq 1-\delta.
  \end{align*}
  \endgroup
  Hence, the constructed point $(Q, \hat P', T)$ is feasible in Problem (\ref{eq:strassen-representation}) and attains the same objective value as the point $(q, \hat p', t)$ in Problem (\ref{eq:strassen-representation-finite}).
\end{proof}

\subsection{Proof of Theorem \ref{thm:dual-formulations}}
\label{sec:proof-theor-dual}

\begin{proof}
  Introduce the Lagrangian function of the optimization problem (\ref{eq:strassen-representation}) as
  \begin{align*}
    & L(Q, \hat P', T ~; ~\beta, u, v, \gamma) \\[0.3em]
    = & \int_\Xi\ell(z, \xi) \, \d Q(\xi) + \beta(r-I(\hat P', Q)) - \int_{\Xi'\times \Xi'_N} u(\xi') \, \d T(\xi', \xi'_i) + \int_{\Xi'} u(\xi') \, \d \hat P'(\xi')\\[0.3em]
    & \quad + \int_{\Xi'\times \Xi'_N} v(\xi'_i)\, \d T(\xi', \xi'_i) - \int_{\Xi'_N} v(\xi'_i) \, \d P_N(\xi'_i) +  \int_{\Xi'\times\Xi'_N}\gamma \one{\norm{\xi'-\xi'_i}\leq \delta}\d T(\xi', \xi_i') - \gamma(1-\delta)\\[0.3em]
    = & \beta r - \int_{\Xi'_N} v(\xi'_i) \, \d P_N(\xi'_i) - \gamma(1-\delta)  + \int_{\Xi'\times\Xi'_N} \gamma \one{\norm{\xi'-\xi'_i}\leq \delta} + v(\xi'_i) - u(\xi')\, \d T(\xi', \xi'_i) \\[0.3em]
    & \quad + \int_\Xi\ell(z, \xi) \, \d Q(\xi) + \int_{\Xi'} u(\xi') \, \d \hat P'(\xi') - \beta I(\hat P', Q).
  \end{align*}
  We have $\tilde c^\delta(z, P'_N) =\max_{Q\in \mc P(\Xi), \hat P'\in \mc M_+(\Xi'), T\in \mc M_+(\Xi'\times \Xi_N)} \min_{\beta\geq 0, u:\Xi'\to\Re, v:\Xi_N\to\Re, \gamma\geq 0} L(Q, \hat P', T ~; ~\beta, u, v, \gamma)$.
  The associated dual function is
  \begin{align*}
    & g(\beta, u, v, \gamma)\\[0.3em]
     = & \max \set{L(Q, \hat P', T ~; ~\beta, u, v, \gamma)}{{Q\in \mc P(\Xi), ~\hat P' \in \mc M_+(\Xi'), ~T\in \mc M_+(\Xi'\times\Xi_N')}} \\
                           = &  \beta r - \int_{\Xi'_N} v(\xi'_i) \, \d P_N(\xi'_i) - \gamma(1-\delta) + \chi_\infty\{\gamma \one{\norm{\xi'-\xi'_i}\leq \delta} +  v(\xi'_i) \leq u(\xi')~~ \forall \xi'\in \Xi', \,\xi'_i\in \Xi'_N \}+\\[0.3em]
    & \max \set{ \int_\Xi\ell(z, \xi) \, \d Q(\xi) + \int_{\Xi'} u(\xi') \, \d \hat P'(\xi') - \beta \int_{\Xi'} \log\left(\frac{\d \hat P'}{\d Q}(\xi')\right) \d \hat P'(\xi') }{Q\in \mc P(\Xi), ~\hat P' \in \mc M_+(\Xi')}\\[0.3em]
    = &  \beta r - \int_{\Xi'_N} v(\xi'_i) \, \d P_N(\xi'_i) - \gamma(1-\delta) + \chi_\infty\{\gamma \one{\norm{\xi'-\xi'_i}  \leq \delta}+ v(\xi'_i)\leq u(\xi')~~ \forall \xi'\in \Xi', \,\xi'_i\in \Xi'_N \}+\\[0.3em]
    & \max_{\xi\in\Xi} \left[\ell(x, \xi) + \beta \int_{\Xi'} \exp\left(\frac{u(\xi')}{\beta}+1\right) \d O_\xi(\xi')\right]
  \end{align*}
  where we establish the ultimate equality by observing that
  \begingroup
  \allowdisplaybreaks
  \begin{align*}
    & \max \set{ \int_\Xi\ell(z, \xi) \, \d Q(\xi) + \int_{\Xi'} u(\xi') \, \d \hat P'(\xi') - \beta \int_{\Xi'} \log\left(\frac{\d \hat P'}{\d (O\star  Q)}(\xi')\right) \d \hat P'(\xi') }{Q\in \mc P(\Xi), ~\hat P' \in \mc M_+(\Xi')}\\[0.3em]
    = & \max \set{ \int_\Xi\ell(z, \xi) \, \d Q(\xi) + \int_{\Xi'} u(\xi') R(\xi') - \beta \log\left(R (\xi')\right)  R(\xi') \, \d (O\star Q)(\xi') }{Q\in \mc P(\Xi), ~R(\xi')\geq 0}\\[0.3em]
    = & \max \set{ \int_\Xi\ell(z, \xi) \, \d Q(\xi) + \int_{\Xi'} \left[\max_{R\geq 0} u(\xi') R - \beta\log\left(R\right)  R\right] \, \d (O\star Q)(\xi') }{Q\in \mc P(\Xi)}\\[0.3em]
    = & \max \set{ \int_\Xi\ell(z, \xi) \, \d Q(\xi) + \int_{\Xi'} \beta \exp\left(\frac{u(\xi')}{\beta} - 1\right) \, \d (O\star Q)(\xi') }{Q\in \mc P(\Xi)} \\[0.5em]
    = & \max_{\xi\in \Xi} \,\ell(z, \xi) + \beta \int_{\Xi'} \exp\left(\frac{u(\xi')}{\beta} - 1\right) \, \d O_\xi(\xi')
  \end{align*}
  \endgroup
  where the first equality follows from the substitution $R=\tfrac{\d \hat P'}{\d (O\star Q)}$. The fourth equality follows from the identity $\max_{R\geq 0} a R - \beta\log\left(R\right)R = a R^\star(a, \beta) - \beta\log\left(R^\star(a, \beta)\right)R^\star(a, \beta) = \beta\exp(\tfrac{a}{\beta}-1)$  with $R^\star(a, \beta)=\exp\left(\tfrac{a}{\beta}-1\right)$ for all $a$.
  Weak duality implies that $\tilde c^\delta(z, P'_N)\leq \min \set{g(\beta, u, v, \gamma)}{\beta\geq 0,~u:\Xi'\to\Re,~v:\Xi_N\to\Re, \gamma\geq 0}=~(\ref{eq:dual-formulation})$. Strong duality, i.e., $c^\delta(z, P'_N)=(\ref{eq:dual-formulation})$, follows from the assumed existence of the Slater points $Q_s$ and $\hat P'_s$ \citep[Proposition 5.3.1]{bertsekas2009convex} in case $\Xi'$ is finite.

  In case $\Xi'$ is not finite we consider the slightly weaker dual
  \begin{equation}
  \label{eq:dual-formulation-weak}
  \begin{array}{r@{~~}l}

    {\displaystyle\inf} & \displaystyle \beta r\! -\!\int_{\Xi'} v(\xi') \, \d P'_N(\xi')  + \gamma (\delta-1) + \max_{\xi\in\Xi}\! \left[\ell(z, \xi) + \beta \!\!\int_{\Xi'}\!\!\! \exp\left(\frac{u(\xi')}{\beta}-1\right)\d O_\xi(\xi')\right]\\[0.3em]
    \st & \beta\geq 0, ~u\in \mc {PC}(\Xi'), ~v:\Xi'_N\to\Re,~\gamma\geq 0,\\[0.3em]
                        & \gamma \one{\norm{\xi'-\xi'_i}  \leq \delta}+ v(\xi'_i)\leq u(\xi')  \quad \forall \xi'\in \Xi', \,\xi'_i\in \Xi'_N.
  \end{array}
\end{equation}
where $u(\xi)\in \mc {PC}(\Xi')$ indicates that the dual variable $u$ is now restricted to be piecewise constant on the pieces $\Xi'_k$ for $0\leq k\leq K-1$ as defined in Section \ref{sec:strass-repr}. Taking $u(\xi')=u_k$ for all $\xi'\in \Xi'_k, ~0\leq k\leq K-1$ and $v(\xi_i)=v_i$ for $1\leq i\leq \abs{\Xi'_N}$ the weak dual \eqref{eq:dual-formulation-weak} is clearly equivalent to
\begin{equation}
  \begin{array}{r@{~~}l}
    {\displaystyle\inf} &  \beta r -\sum_{i=1}^{\abs{\Xi'_N}} v_i P'_N(\xi'_i)  + \gamma (\delta-1) + \max_{\xi\in\Xi}\left[\ell(z, \xi) + \beta \sum_{k=0}^{K-1} \exp\left(\frac{u_k}{\beta}-1\right)O_\xi(\Xi'_k)\right]\\[0.3em]
    \st & \beta\geq 0, ~u\in \Re^K, ~v\in \Re^{\abs{\Xi'_N}},~\gamma\geq 0,\\[0.3em]
                        & \gamma b(k, i) + v_i\leq u_k  \quad  0\leq k\leq K-1, ~1\leq i\leq \abs{\Xi'_N}.
  \end{array} 
\end{equation}
as $\one{\norm{\xi'-\xi'_i}  \leq \delta} = b(k, i)$ for all $\xi'\in \Xi'_k$ and $\xi'_i\in \Xi'_N$. The remainder of the theorem follows from Lemma \ref{sec:proof-theor-refthm:d} and Lemma \ref{lemma:strassen-representation-finite}.
\end{proof}

\begin{lemma}
  \label{sec:proof-theor-refthm:d}
  Let $\Xi'_N={\rm{supp}}(P'_N)$ denote the support of $P'_N$ and let $\Xi$ be finite.
  Consider the finite convex minimization problem
  \begin{equation}
    \label{eq:dual-formulation-special-finite}
    \begin{array}{r@{~~}l}
      {\displaystyle\inf} &  \beta r -\sum_{i=1}^{\abs{\Xi'_N}} v_i P'_N(\xi'_i)  + \gamma (\delta-1) + \max_{\xi\in\Xi}\left[\ell(z, \xi) + \beta \sum_{k=0}^{K-1} \exp\left(\frac{u_k}{\beta}-1\right)O_\xi(\Xi'_k)\right]\\[0.3em]
      \st & \beta\geq 0, ~u\in \Re^K, ~v\in \Re^{\abs{\Xi'_N}},~\gamma\geq 0,\\[0.3em]
                          & \gamma b(k, i) + v_i\leq u_k  \quad  0\leq k\leq K-1, ~1\leq i\leq \abs{\Xi'_N}.
    \end{array} 
  \end{equation}
  If there exists $Q_s\in \mc P(\Xi)$, $\hat P'_s\in \mc P(\Xi')$ with $I(\hat P'_s, Q_s)<r$ and $\LP{\hat P'_s}{P'_N}<\delta$ then $\eqref{eq:strassen-representation-finite}=\eqref{eq:dual-formulation-special-finite}$.
\end{lemma}
\begin{proof}
  It can be shown that problem \eqref{eq:dual-formulation-special-finite} is the Lagrangian dual of Problem \eqref{eq:strassen-representation-finite}. As this Lagrangian dual can be derived following the proof of Theorem \ref{thm:dual-formulations} almost verbatim, we omit it here for the sake of brevity. Furthermore, the construction used in the proof of Lemma \ref{lemma:strassen-representation-finite} shows that the existence of $Q_s\in \mc P(\Xi)$, $\hat P'_s\in \mc P(\Xi')$ with $I(\hat P'_s, Q_s)<r$ and $\LP{\hat P'_s}{P'_N}<\delta$ ensures the existence of a Slater point in the finite dimensional optimization Problem \eqref{eq:strassen-representation-finite}.  Strong duality, i.e., $\eqref{eq:strassen-representation-finite}=\eqref{eq:dual-formulation-special-finite}$, follows then from \citep[Proposition 5.3.1]{bertsekas2009convex}.
\end{proof}

\end{document}